\documentclass[11pt,twoside, reqno]{amsart}

\usepackage{amsmath}
\usepackage{amsthm}
\usepackage{amsfonts, amssymb}
\usepackage{mathrsfs}
\usepackage[all]{xy}
\usepackage{url}

\usepackage{latexsym}
\usepackage[dvips]{graphicx}

\theoremstyle{plain}
\newtheorem{lema}{Lemma}
\newtheorem{prop}[lema]{Proposition}
\newtheorem{teo}[lema]{Theorem}

\newtheorem{coro}[lema]{Corollary}
\newtheorem{corollary}[lema]{Corollary}
\newtheorem{remark}[lema]{Remark}

\newtheorem{obs}[lema]{Remark}
\theoremstyle{definition}
\newtheorem{defi}[lema]{Definition}
\newtheorem{definition}[lema]{Definition}
\newtheorem{ej}[lema]{Example}

\newcommand{\N}{\mathbb{N}}

\newcommand{\E}{\mathbb{E}}

\newcommand{\TC}{\textrm{TC}}

\newcommand{\st}{\textrm{st}}

\newcommand{\cat}{{\sf{cat}}}
\newcommand{\Cat}{{\sf{Cat}}}
\newcommand{\tc}{{\sf{TC}}}
\newcommand{\pr}{{\rm {pr}}}

\DeclareMathOperator{\join}{\circledast}
\DeclareMathOperator{\joinp}{\ast}

\begin{document}

\title{Topology of medial regime random simplicial complexes }

\author[J.A. Barmak]{Jonathan Ariel Barmak$^{\dagger}$}

\thanks{$^{\dagger}$ Researcher of CONICET. Partially supported by grants PUE IMAS 2017, PICT-
2019-02338 and UBACyT 20020190100099BA}

\address{Universidad de Buenos Aires. Facultad de Ciencias Exactas y Naturales. Departamento de Matem\'atica. Buenos Aires, Argentina.}

\email{jbarmak@dm.uba.ar}
\author[M. Farber]{Michael Farber$^{\dagger\dagger}$}

\thanks{$^{\dagger\dagger}$ M. Farber was partially supported by a DMS-EPSRC research grant}

\address{School of Mathematical Sciences, Queen Mary University of London, London, UK}

\email{M.Farber@qmul.ac.uk}


\begin{abstract} 
We analyse topology of random simplicial complexes in the medial regime.
We show that these complexes are highly connected and have homotopy type of iterated suspensions. 
One of our main tools is a new combinatorial criterion for high connectivity of simplicial complexes, which is more flexible than conicity. 
We show that topological complexity of random simplical complexes in the medial regime is bounded above by $2$ and it equals $2$ for a class of homogenous medial regime random simplicial complexes, a.a.s. 
\end{abstract}

\subjclass[2010]{05E45, 55M30, 60C05.}

\maketitle

\section{Introduction}
Random simplicial complexes are used in various branches of mathematics and in other sciences. Most recently, random simplicial complexes appear as a major tool to model large complex networks consisting of a vast number of interacting objects. While pairwise interactions
can be modelled by a graph, the higher order interactions between the objects
require the language of simplicial complexes, see \cite{Bat}.
Mathematical theory of random simplicial complexes is an active research field, we can refer to a recent survey \cite{BK}. 

In the mathematical literature there exist several probabilistic models of simplicial complexes. 
In this paper we shall be mainly working with the multi-parameter model, which was developed and 
analysed in \cite{CF}, \cite{FMN}. The multi-parameter model includes, as special cases, the Linial -- Meshulam model \cite{LM} and some other well-known models.

The principal question we tackle in this paper concerns the topological complexity of random simplicial complexes. 
The concept
of topological complexity,  $\TC(X)$,
 introduced in \cite{Far0}, is motivated by problems of engineering and robotics. $\TC(X)$ is a numerical invariant associated with every path-connected topological space $X$. 
 The value of $\TC(X)$ is
 a reflection of navigational complexity of motion algorithms for autonomous systems having $X$ as their configuration space, see \cite{Far0}. 
It is well-known that the numerical value of $\TC(X)$ can be arbitrarily large, see for example Theorem 4.57 in \cite{Far2}. However, surprisingly, as we shall establish in this paper, the value $\TC(X)$ is at most $2$ for random simplicial complexes in the medial regime. Moreover, we show that $\tc(X)=2$, a.a.s., for a special class of homogeneous medial regime random simplicial complexes. 


The major results of this paper are stated as Theorem \ref{thm:tcle2} and Theorem \ref{thm:tc2}. 
These results improve Theorem 8.1 from \cite{Far}, stating that topological complexity $\tc(X)$ of a random simplicial complex $X$ in the medial regime is bounded above by 4, with probability tending to one, when the number $n$ of vertices tends to infinity. 
The proof of Theorem 8.1 in \cite{Far} uses a bound for the dimension of a random complex \cite{FM}, a bound for the connectivity of a random complex obtained using the notion of conicity of simplicial complexes \cite{Bar, Bar2}, and the upper bound for $\TC(X)$ involving the dimension and the connectivity, see Theorem 4.16 from \cite{Far2}. 


The progress made in this paper is based on two achievements. Firstly, we introduce and analyse a new combinatorial property $Q_r$ which is weaker than conicity but still implies connectivity. Using property $Q_r$ we show that the Lusternik - Schnirelmann category $\cat(X)$ 
of a random simplicial complex $X$ in the medial regime satisfies $\cat(X)\le 1$, see Theorem \ref{thm:thm1}. This leads to Theorem \ref{thm:tcle2} stating that $\TC(X)\le 2$, a.a.s. 

The second main achievement of this paper is in establishing the opposite inequality $\TC(X)\ge 2$ for a random simplicial complex $X$. 
In this part of the paper we restrict our attention to  {\it a homogeneous medial regime} (HMR) random simplicial complex $X$, see \S \ref{sec:hmr}, which is a special class of a medial regime random complex. 
Theorem \ref{thm:tc2} is based on the analysis of the Betti numbers of a HMR random complex. We show that the expected values of the Betti numbers form a {\it unimodal} sequence, i.e. this sequence is monotone increasing until it reaches the maximum and then it starts its monotone decrease. The dimension of the maximum of the expected Betti numbers is {\it the critical dimension}. We use concentration inequalities to show that the Betti number in the critical dimension tends to infinity. 
Theorem \ref{thm:tc2} states that {\it for a subset $\N'\subset \N$ of density 1, the probability that an HMR random simplicial complex $X\subset \Delta_n$ satisfies $\tc(X)=2$ tends to one as $n\to \infty$, $n\in \N'$. } Here $\Delta_n$ denotes the simplex with the vertex set $[n]= \{1, 2, \dots, n\}$. 
 

At the moment it is not clear whether the statement of Theorem \ref{thm:tc2} becomes false with $\N'=\N$, or, rather, a generalisation of our result can be achieved by a different method. The number of vertexes is a natural parameter of the problem and it plays an important role in our approach. 

The limit case $n=\infty$ of random simplicial complexes on a countable set of vertexes was studied in \cite{Far}, and the situation there is totally opposite:  a random complex $X\subset \Delta_{\N}$ on countably many vertexes is isomorphic to {\it the Rado complex} \cite{EFM} and its geometric realisation is homeomorphic to the infinite-dimensional simplex with probability one, see \cite{EFM}. Hence, in this case 
$\TC(X)=0$ with probability one, see Theorem 11.2 and Theorem 12.1 from \cite{Far}. 
This connection with the Rado complex \cite{EFM} may serve as {\it \lq\lq an intuitive explanation\rq\rq}\, of our main results concerning the topological complexity of random simplicial complexes in the medial regime as these finite simplicial complexes can be viewed as the induced subcomplexes of the Rado complex on a randomly chosen large number of vertexes. 

The authors are thankful to John Oprea for helpful comments.


%
%


\section{Property $Q_r$ and high connectivity}

\subsection{Connectivity and conicity} 

Recall that a topological space $X$ is called {\it $r$-connected} (where $r\ge 0$ is an integer) if every continuous map $S^d\to X$ from the $d$-dimensional sphere into $X$, with $d\le r$, can be extended to a continuous map of the disk $D^{d+1}\to X$. In addition, we shall say that a space $X$ is {\it $(-1)$-connected} if it is non-empty, $X\not=\emptyset$.

The following combinatorial notion was introduced in \cite{Bar}: a simplicial complex $K$ is called \textit{$r$-conic} (where $r\ge 1$) if every subcomplex $L\subseteq K$ with at most $r$ vertices is contained in a simplicial cone, or, equivalently, every subcomplex $L\subseteq K$ with at most $r$ vertices
is contained in the closed star $\st_K (v)$ of a vertex $v\in K$. 





\begin{teo} For $r\ge 0$ any $(2r+2)$-conic simplicial complex $K$ is $r$-connected. 
\end{teo}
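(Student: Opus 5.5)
We argue by induction on $r$, using simplicial approximation together with the fact that closed stars are contractible. The idea is to take a map $f\colon S^d\to |K|$ with $d\le r$ and replace it by a simplicial map $\varphi\colon T\to K$, where $T$ is a triangulation of $S^d$. We then cone off $\varphi$ in pieces. Each piece is a subcomplex of $K$ spanned by few vertices, so the $(2r+2)$-conic hypothesis puts it inside a star. The base case $r=0$ also comes out of this scheme: a $2$-conic complex is nonempty (the empty subcomplex lies in some star, so a vertex exists), and any two vertices lie in a common $\st_K(v)$, so they are joined by an edge path through $v$. Hence $K$ is $0$-connected.

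For the general step we may assume, by induction, that $K$ is already $(r-1)$-connected, since a $(2r+2)$-conic complex is also $(2r)$-conic. So only $d=r$ remains. The key claim we aim for is this: every simplicial map $\varphi\colon T\to K$, with $T$ a triangulation of $S^r$, is null-homotopic. The plan is to triangulate the disk $D^{r+1}$ as a cone-like filling, proceeding skeleton by skeleton. For each vertex $x$ of $T$ we choose a vertex $w_x\in K$. For each simplex $\sigma=\{x_0,\dots,x_k\}$ of $T$ we then need a simplex, or a contractible region, of $K$ containing $\varphi(\sigma)\cup\{w_{x_0},\dots,w_{x_k}\}$. An equivalent and cleaner formulation uses the standard argument of \cite{Bar}. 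We construct a homotopy $\varphi\simeq\psi$, where $\psi$ maps each simplex $\sigma$ of $T$ into a star. The point is that $\varphi(\sigma)$, together with the image of its boundary data, involves at most $2(k+1)\le 2r+2$ vertices. The $(2r+2)$-conic hypothesis then provides a cone vertex $v_\sigma$ whose star contains the full subcomplex on these vertices.

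Concretely, we proceed as follows. We use the nerve/carrier lemma (Quillen's fiber lemma, or the contractible carrier theorem). We define a carrier $\Phi$ that sends each simplex $\sigma$ of the cone $CT$ (or of a suitable subdivision $T'$ of $S^r\times I$ glued to a disk) to a contractible subcomplex of $K$, with $\Phi$ monotone. Since $K$ is $(r-1)$-connected, we only need the carriers to be $(\dim\sigma-1)$-connected. By induction on $r$ this holds for any subcomplex that is $(2\dim\sigma)$-conic. A carrier of the form ``full subcomplex on the vertices of $\varphi(\sigma)$ and their cone points'' is itself conic of the required order, because the conic hypothesis in $K$ restricts to subcomplexes generated by at most $2r+2$ vertices. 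The contractible carrier theorem then extends $\varphi$ over $D^{r+1}$.

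The main obstacle is bookkeeping the vertex count. We must check that at each stage the relevant subcomplex involves at most $2r+2$ vertices: up to $r+1$ vertices from a top simplex $\varphi(\sigma)$, plus up to $r+1$ auxiliary cone vertices chosen for its faces. Only then can a single star be found and the carriers made monotone. To get this, we first choose, for each vertex $x$ of $T$, a cone vertex $w_x$ with $\varphi(x)\in\st(w_x)$. For a top simplex $\sigma$ we then apply $(2r+2)$-conicity to the subcomplex spanned by $\varphi(\sigma)$ and the $w_x$ with $x\in\sigma$. This yields a common star, and the resulting contractible stars serve as carriers.
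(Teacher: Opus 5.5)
Your base case and the reduction to $d=r$ are fine. The inductive step has a genuine gap: nothing in your sketch turns the local hypothesis (subcomplexes with at most $2r+2$ vertices lie in a star) into a null-homotopy of a sphere $\varphi\colon T\to K$ whose image may have arbitrarily many vertices.

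\begin{itemize}
\item If you extend over the cone $CT$ using the stars $\st_K(v_\sigma)$ as carriers, monotonicity forces $\Phi(\text{apex})\subseteq\bigcap_\sigma \st_K(v_\sigma)$, with $\sigma$ ranging over all top simplices of $T$. Conicity only produces a common vertex for at most $2r+2$ such stars at once.
\item The intermediate homotopy $\varphi\simeq\psi$ with $\psi$ mapping simplices into stars is vacuous, since $\varphi$ already does that.
\item Your other carrier rests on a false claim. Conicity is not inherited by full subcomplexes, because the cone vertex supplied by $K$ need not belong to the subcomplex. For example, a cone $K=w*M$ is conic of every order, yet the full subcomplex on two non-adjacent vertices of $M$ is disconnected.
\end{itemize}

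The missing ingredient is the argument the paper points to; it is the proof of Theorem~\ref{thm:6} with conicity in place of $Q_r$. Cover $K$ by the stars $\st_K(v)$ and apply the Nerve Lemma.

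\begin{itemize}
\item The global step: any $2r+2$ vertices form a $0$-dimensional subcomplex lying in some $\st_K(v)$. So the nerve has complete $(2r+1)$-skeleton and is $r$-connected, however large $K$ is.
\item The intersections: for $Z=\{v_1,\dots,v_t\}$ with $2\le t\le r+1$, the complex $\st_K(Z)$ is $(2(r+1-t)+2)$-conic. Indeed, if $L'\subseteq\st_K(Z)$ has at most $2r+4-2t$ vertices, then $L'\join Z\subseteq K$ has at most $2r+2$ vertices. Hence it lies in some $\st_K(v)$, and this gives $L'\subseteq\st_{\st_K(Z)}(v)$. By induction on $r$, $\st_K(Z)$ is then $(r+1-t)$-connected.
\end{itemize}

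If you prefer carriers, you must rebuild exactly this. Map $T$ to the nerve by $x\mapsto\varphi(x)$ and null-homotope it there using the complete skeleton. Then pull back along the carrier $\tau_0\subset\dots\subset\tau_k\mapsto\bigcap_{v\in\tau_0}\st_K(v)$ on the barycentric subdivision. The connectivity this carrier needs is precisely what the conicity of $\st_K(Z)$ supplies.
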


The above result appears in \cite{Bar2}; its proof 
uses the Nerve lemma \cite{Bjo}, which we now recall.

\begin{teo}[Nerve lemma] \label{nerv}
Let $K$ be a simplicial complex and let $\{L_i\}_{i\in I}$ be a family of subcomplexes covering $K$. If, for some $r\ge 0$, each non-empty intersection 
$$L_{i_1}\cap L_{i_2} \cap \ldots \cap L_{i_t}\quad \mbox{with}\quad  1\le t\le r+1$$ is $(r+1-t)$-connected, then the complex $K$ is $r$-connected if and only if the nerve $\mathcal{N}(\{L_i\}_{i\in I})$ is $r$-connected.
\end{teo}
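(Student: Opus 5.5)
The plan is to compare $K$ and the nerve $\mathcal{N}=\mathcal{N}(\{L_i\}_{i\in I})$ through a common intermediate space, and to show that the two projections from it have the required connectivity. For a simplex $\sigma=\{i_1,\dots,i_t\}$ of $\mathcal{N}$, write $L_\sigma=L_{i_1}\cap\dots\cap L_{i_t}$, which is non-empty by definition of the nerve. We form the ``Mayer--Vietoris blow-up''
$$Z=\bigcup_{\sigma\in\mathcal{N}} |L_\sigma|\times|\sigma|\ \subseteq\ |K|\times|\mathcal{N}|,$$
together with the two projections $p\colon Z\to |K|$ and $q\colon Z\to|\mathcal{N}|$. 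We will work with this space directly. Alternatively, one can pass to the order complex of the poset of pairs $(\tau,\sigma)$ with $\tau$ a face of $L_\sigma$; this keeps everything simplicial and lets us use Quillen-type fiber arguments, as in Bj\"orner's proof.

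\emph{Step 1: $p$ is a homotopy equivalence.} For a point $x$ in the open simplex of a face $\tau$ of $K$, the fiber $p^{-1}(x)$ is $\{x\}\times|\Delta_{I_\tau}|$. Here $I_\tau=\{i: \tau\in L_i\}$, which is non-empty because the $L_i$ cover $K$, and $\Delta_{I_\tau}$ is the full simplex on $I_\tau$, which lies in $\mathcal{N}$ because $\tau\in L_{I_\tau}$. These fibers are contractible and vary compatibly over the cells of $K$. Induction over the cells of $K$ (gluing lemma), or Quillen's fiber lemma in the poset version, then gives that $p$ is a homotopy equivalence. This step uses no hypothesis beyond the covering property. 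If $I$ is infinite, we either reduce to finite subfamilies by compactness of spheres, or note that the argument is cellwise anyway.

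\emph{Step 2: $q$ is $(r+1)$-connected,} that is, $q$ induces isomorphisms on $\pi_j$ for $j\le r$ and a surjection on $\pi_{r+1}$. Over an interior point of a simplex $\sigma$ with $t$ vertices (so $\dim\sigma=t-1$), the fiber of $q$ is $|L_\sigma|$. By hypothesis this fiber is $(r+1-t)$-connected whenever $t\le r+1$; for $t\ge r+2$ no hypothesis is needed. We will prove by induction over the skeleta of $\mathcal{N}$ that $q^{-1}(\mathcal{N}^{(k)})\to\mathcal{N}^{(k)}$ is $(r+1)$-connected. Passing from $k-1$ to $k$ attaches, for each $k$-simplex $\sigma$, the piece $|L_\sigma|\times|\sigma|$ along $|L_\sigma|\times|\partial\sigma|$ in the source, and $|\sigma|$ along $|\partial\sigma|$ in the target. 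Since $|L_\sigma|$ is $(r-k)$-connected, the map of pairs $(|L_\sigma|\times D^k,|L_\sigma|\times S^{k-1})\to(D^k,S^{k-1})$ is $(r+1)$-connected. This is a relative Hurewicz / Blakers--Massey style computation: the pair $(L\times D^k, L\times S^{k-1})$ is, up to connectivity, $L$ smashed with $(D^k,S^{k-1})$, so it has connectivity $(r-k)+k+1=r+1$ relative to the target. The gluing lemma for $n$-connected maps then propagates connectivity to the pushout. For $k\ge r+2$ the attached cells have dimension at least $r+2$ and do not affect $\pi_j$ for $j\le r+1$ in either space, so those skeleta need no hypothesis.

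\emph{Conclusion and main obstacle.} From Steps 1 and 2, $\pi_j(K)\cong\pi_j(Z)\cong\pi_j(\mathcal{N})$ for $j\le r$. Connectivity of spaces (including path-connectivity, and non-emptiness in the $(-1)$ case) is detected by these groups, so $K$ is $r$-connected iff $\mathcal{N}$ is. The main obstacle is Step 2: making the connectivity bookkeeping in the gluing lemma precise, and in particular handling the fact that $\pi_1$ may be non-abelian when $r\ge1$. We would first try to quote the standard ``cellular gluing lemma for $n$-connected maps'' (homotopy excision). If that proves awkward, we would fall back on Bj\"orner's poset formulation, where Quillen's fiber lemma with connectivity bounds on the fibers $q^{-1}(\mathcal{N}_{\ge\sigma})$ gives the result directly.
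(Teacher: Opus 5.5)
The paper does not prove this statement; it simply quotes it from Björner \cite{Bjo}, who works with order complexes and Quillen-type fiber lemmas. Your route through the blow-up $Z\subseteq|K|\times|\mathcal{N}|$ and its two projections is a different, standard one, and most of it is sound. The fibers of $p$ and $q$ are identified correctly. Step 1 is fine: there the corner maps $|\partial\tau|\times|\Delta_{I_\tau}|\to|\partial\tau|$ and $|\tau|\times|\Delta_{I_\tau}|\to|\tau|$ are homotopy equivalences, so the ordinary gluing lemma applies. The filtration $Z_k=q^{-1}(|\mathcal{N}^{(k)}|)=\bigcup_{\dim\sigma\le k}|L_\sigma|\times|\sigma|$ with the bookkeeping $r+1-t=r-k$ is the right setup. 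Once completed, your argument gives more than the statement: $\pi_j(K)\cong\pi_j(\mathcal{N})$ for $j\le r$, and a surjection on $\pi_{r+1}$.

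\textbf{The gap is the inductive step of Step 2}, which is exactly the point you flag without resolving it. The gluing lemma for $n$-connected maps needs the corner maps themselves to be highly connected. Here $|L_\sigma|\times S^{k-1}\to S^{k-1}$ and $|L_\sigma|\times D^k\to D^k$ have fiber $|L_\sigma|$, so they are only $(r-k+1)$-connected. Applied literally, the lemma only shows that $q_k$ is $(r-k+1)$-connected. An ``$(r+1)$-connected map of pairs'' is not something a gluing lemma can use. Relative homotopy groups do not satisfy excision, and relative Hurewicz needs simple connectivity, which is precisely your $\pi_1$ worry. Your intuition is right: the cofiber of $|L_\sigma|\times S^{k-1}\to|L_\sigma|\times D^k$ is $S^k\vee\Sigma^k|L_\sigma|$, i.e. the nerve's $k$-cell plus an $r$-connected excess. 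But this still has to be turned into an argument.

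\textbf{A repair that avoids Hurewicz and $\pi_1$ altogether:}

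First, for each $k$-simplex $\sigma$ pick a vertex $v_\sigma\in L_\sigma$ and set $W=Z_{k-1}\cup\bigcup_\sigma\{v_\sigma\}\times|\sigma|$. Then $q|_W$ is the map of pushouts induced by $q_{k-1}$ and the identity on the new $k$-cells. So it is $(r+1)$-connected by the ordinary gluing lemma.

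Second, $Z_k$ is obtained from $W$ by attaching along the pairs $(|L_\sigma|,v_\sigma)\times(|\sigma|,|\partial\sigma|)$. Since $(|L_\sigma|,v_\sigma)$ is $(r-k)$-connected, these product pairs are $r$-connected: after CW approximation their relative cells have dimension $\ge r+1$. Hence $W\hookrightarrow Z_k$ is $r$-connected.

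Third, use the elementary fact that if $g\circ f$ is $n$-connected and $f$ is $(n-1)$-connected, then $g$ is $n$-connected. Taking $f$ to be $W\hookrightarrow Z_k$, $g=q_k$ and $n=r+1$ gives the inductive step.

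The same argument handles $k\ge r+1$, where $L_\sigma$ is merely nonempty and $W\hookrightarrow Z_k$ is $(k-1)$-connected. This also replaces your remark that cells of dimension $\ge r+2$ ``do not affect'' $\pi_{r+1}$. That remark is inaccurate, since such cells can kill classes in $\pi_{r+1}$. It does no harm, though: you only need surjectivity on $\pi_{r+1}$, and that is preserved.
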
  

  \subsection{The property $Q_r$}









The following definition is slightly weaker than $(2r+2)$-conicity but it still implies $r$-connectivity, as we shall show below, see Theorem \ref{thm:6}. 


\begin{defi}\label{def:qr}{\it 
We say that a simplicial complex $K$ has property $Q_r$ (where $r\ge 0$ is an integer)
if every subcomplex $L\subseteq K$ with at most $2r+2$ vertices and at most $3^{r+1}-1$ simplexes is contained in the closed star $\st_K(v)$ of a vertex $v\in K$. }
\end{defi}
\begin{remark}\label{rk1} {\rm The following comments are related to Definition \ref{def:qr}:}
\begin{enumerate}
{\rm 
\item[{(1)}] 
A simplicial complex with at most $2r+2$ vertices may potentially have 
$2^{2r+2}-1=4^{r+1}-1$ simplexes, and hence the bound $3^{r+1}-1$ in Definition \ref{def:qr} is meaningful. 


\item[{(2)}] A simplicial complex $K$ satisfies $Q_0$ if and only if every two vertices $v_1, v_2$ of $K$ are 
either connected by an edge, or $v_1$ and $v_2$ have a common neighbour, i.e. for some vertex $v\in K$ the edges $vv_1$ and $vv_2$ are in $K$. 
Thus, clearly, the property $Q_0$ implies connectivity of the complex $K$. 

\item[{(3)}] Obviously, any $(2r+2)$-conic complex satisfies $Q_r$. 

\item[{(4)}] Consider the complex $K$, which is the boundary of a 3-dimensional simplex. 
The complex $K$ has 4 vertexes and is not $4$-conic, as $\st_K(v)\not= K$ for any vertex $v\in K$. 
The number of simplexes of $K$ equals $2^4-2=14$, and 
any subcomplex $L\subseteq K$ with at most $3^2-1=8$ simplexes must be proper and must not include one of the 2-dimensional faces of $K$. Therefore,
 $L$ must be contained in the closed star of a vertex of $K$. We see that $K$
 satisfies $Q_1$. 
 We conclude that  {\it in general, property
$Q_r$ can be weaker than $(2r+2)$-conicity. }

\item[{(5)}] One may rephrase property $Q_r$ as follows. We say that a simplicial complex $L$ is starlike if for a vertex $v\in L$ one has 
$L= \st_L(v)$. This means that every simplex of $L$ either contains $v$ or is a face of a simplex containing $v$. 
It is obvious that in Definition \ref{def:qr} one may disregard the starlike simplicial complexes $L$. Thus, a simplicial complex $K$ has property $Q_r$ 
iff every non-starlike subcomplex $L\subseteq K$ with at most $2r+2$ vertices and at most $3^{r+1}-1$ simplexes is contained in the closed star $\st_K(v)$ of a vertex $v\in K$.
}
\end{enumerate}
\end{remark}
\subsection{The operation of amalgamated join} 
Let $K$ and $L$ be simplicial complexes with their vertex sets not necessarily disjoint. We denote by $K \circledast L$ the \textit{amalgamated join} of the complexes $K$ and $L$. The simplexes of the simplicial complex $K \circledast L$ are the unions $\sigma \cup \tau$ with $\sigma \in K$, $\tau \in L$, and also the simplexes of $K$ and the simplexes of $L$. 

If the vertex sets of $K$ and $L$ are disjoint, then the complex $K \circledast L$ coincides with the usual join $K *L$. 
In general, the complex $K\circledast L$ is a quotient of $K*L$. Note that the operation $\circledast$ is commutative and associative.

\begin{ej}\label{ex4}
Let $K$ be a simplicial complex, let $L\subseteq K$ be a subcomplex and let $v$ be a vertex of $K$. Then $L \subseteq \st_K (v)$ if and only if the cone $v\circledast L $ is contained in $K$. Moreover, if $v_1, v_2, \ldots, v_t$ are $t$ vertices of $K$, then $$L\subseteq \bigcap \st_K(v_i)$$ if and only if $$Z \circledast L \subseteq K,$$ where $Z=\{v_1,v_2,\ldots, v_t\}\subseteq K$ denotes the 0-dimensional subcomplex.
\end{ej}

\begin{defi}{\it 
Let $K$ be a simplicial complex, and let $L\subseteq K$ be a subcomplex. The \textit{star} $\st_K(L)$ of $L$ in $K$ is the subcomplex given by those simplexes $\sigma \in K$ such that $\sigma \circledast L\subseteq K$. In other words, it is the biggest subcomplex $L'$ of $K$ such that $L'\circledast L\subseteq K$. 
}\end{defi}

This definition coincides with the usual definition of a closed star in the special case when $L$ is a vertex or a simplex. 

\begin{remark}{\rm 
(1) Note that $\st_K (L)$ is non-empty if and only if $L$ is contained in the star of a vertex of $K$. Thus, a complex $K$ is $r$-conic if and only if the star of every subcomplex $L\subseteq K$ of at most $r$ vertices is non-empty. 

(2) $\st_K(\emptyset)=K$. 

(3) If $Z$ is a $0$-dimensional complex, $\st_K(Z)$ is the intersection of the closed stars of its vertices.}
\end{remark}

\subsection{The connectivity theorem} 
\begin{teo} \label{thm:6} Any simplicial complex with property $Q_r$, where $r\ge 0$, is $r$-connected.
\end{teo}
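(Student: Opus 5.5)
The plan is to induct on $r$, using the Nerve lemma (Theorem \ref{nerv}) applied to the cover of $K$ by the closed stars of its vertices, $\{\st_K(v)\}_{v\in K}$. The base case $r=0$ is Remark \ref{rk1}(2): $Q_0$ implies that $K$ is connected. Note also that $Q_r$ applied to a single vertex shows that $K\neq\emptyset$.

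\emph{Step 1: the nerve.} A set of vertices $Z=\{v_1,\dots,v_t\}$ spans a simplex of the nerve if and only if $\bigcap_i \st_K(v_i)=\st_K(Z)\neq\emptyset$. Equivalently, $Z$ (viewed as a $0$-dimensional subcomplex) lies in $\st_K(w)$ for some vertex $w$. Suppose $t\le 2r+2$. Then $Z$ has at most $2r+2$ vertices and at most $2r+2\le 3^{r+1}-1$ simplexes, so $Q_r$ provides such a $w$. Hence the nerve contains the full $(2r+1)$-skeleton of the simplex on the vertex set of $K$. That skeleton is $2r$-connected, so the nerve is $r$-connected: every sphere of dimension $\le r$ in the nerve lies, up to homotopy, in its $(r+1)$-skeleton, and it bounds in the $(r+2)$-skeleton, which is present since $r+2\le 2r+1$ for $r\ge1$.

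\emph{Step 2: the key lemma on stars.} I would prove the following. Let $L\subseteq K$ have $m$ vertices and $k$ simplexes, and let $s\ge 0$ satisfy $2s+2+m\le 2r+2$ and $3^{s+1}(k+1)-1\le 3^{r+1}-1$. Then $\st_K(L)$ has property $Q_s$.

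For the proof, take $M\subseteq \st_K(L)$ with at most $2s+2$ vertices and at most $3^{s+1}-1$ simplexes. By the definition of the star, $M\circledast L\subseteq K$. This subcomplex has at most $2r+2$ vertices. Its simplexes are of the form $\sigma\cup\tau$ with $\sigma\in M\cup\{\emptyset\}$ and $\tau\in L\cup\{\emptyset\}$, so there are at most $3^{s+1}(k+1)-1\le 3^{r+1}-1$ of them. By $Q_r$ there is a vertex $w$ with $w\circledast M\circledast L\subseteq K$. This gives $w\circledast L\subseteq K$, so $w\in\st_K(L)$. It also gives $w\circledast M\subseteq \st_K(L)$, so $M\subseteq \st_{\st_K(L)}(w)$.

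\emph{Step 3: connectivity of the intersections.} Consider a non-empty intersection $\st_K(Z)$ with $|Z|=t$ and $1\le t\le r+1$.
\begin{itemize}
\item If $t=1$, the set is a closed star, which is a cone and hence contractible.
\item If $t\ge 2$, apply Step 2 with $L=Z$, so $m=k=t$, and $s=r+1-t\ge 0$. The vertex condition reads $2(r+1-t)+2+t\le 2r+2$, i.e. $t\ge 2$. The simplex condition reads $3^{r+2-t}(t+1)\le 3^{r+1}$, i.e. $t+1\le 3^{t-1}$, which holds for all $t\ge 2$, with equality at $t=2$; this is exactly where the bound $3^{r+1}-1$ is tight.
\end{itemize}
In the second case $s<r$, so by the induction hypothesis $\st_K(Z)$ is $(r+1-t)$-connected. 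The Nerve lemma then yields that $K$ is $r$-connected.

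The main obstacle is Step 2, specifically the simplex count for $M\circledast L$. Its product form $3^{s+1}(k+1)$ is what dictates the constant $3^{r+1}-1$ in Definition \ref{def:qr}. One must also check that $w\circledast M\subseteq\st_K(L)$ is genuinely equivalent to $w\circledast M\circledast L\subseteq K$, using associativity of $\circledast$.
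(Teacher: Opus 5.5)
Your proposal is correct and is essentially the paper's own proof: the same cover by vertex stars, the same observation that the nerve contains the full $(2r+1)$-skeleton, and the same argument that $\bigcap_i \st_K(v_i)$ has property $Q_{r-t+1}$ using the vertex bound $2r-t+4\le 2r+2$ and the simplex bound $3^{r+2-t}(t+1)-1\le 3^{r+1}-1$ for $L'\circledast Z$. You state Step 2 for a general subcomplex $L$, but you only apply it with $L=Z$, so this is a repackaging of the paper's argument; your nonemptiness remark is not needed, since the Nerve lemma only concerns non-empty intersections.
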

\begin{proof}
The statement of Theorem \ref{thm:6} is trivial for $r=0$, see Remark \ref{rk1}, part (2). 
This case will be used as the base of induction. 

Assuming that $K$ has property $Q_r$, where $r\ge 1$, consider the cover 
$$
\mathcal{U}=\{\st_K(v)\}_{v\in K}$$
of $K$ formed by the stars of vertices, and apply the Nerve Lemma (see Theorem \ref{nerv}). 
We claim that the nerve $\mathcal N(\mathcal{U})$ of this cover {\it has the complete skeleton up to the dimension} $2r+1$ and, 
since $r<2r+1$, 
the nerve $\mathcal N(\mathcal{U})$ is $r$-connected.
Indeed, if $v_1, v_2, \ldots, v_t$ are pairwise distinct vertices of $K$, where $t\le 2r+2$, we may apply the property $Q_r$ to the 0-dimensional subcomplex 
$L=\{v_1, v_2, \ldots, v_t\}$ and conclude that $L\subseteq \st_K(v)$ for a vertex $v\in K$. Here we use the inequalities
$t\le 2r+2\le 3^{r+1}-1.$
Therefore, $v\in  \bigcap \st_K(v_i),$ implying that $\{v_1, v_2, \dots, v_t\}$
is a simplex of the nerve complex 
$\mathcal N(\mathcal{U})$.

To be able to apply Theorem \ref{nerv} we need to show that for any set of vertices $v_1,v_2,\ldots, v_t \in K$, where 
$1\le t\le r+1$, the complex 
$$L=\bigcap_{i=1}^t \st_K(v_i)$$
is $(r-t+1)$-connected. This is trivial  for $t=1$ as the stars are contractible. 
For $t\ge 2$, we may use induction, and thus it suffices to prove that the complex $L$ satisfies $Q_{r-t+1}$. 

Let $L'$ be a subcomplex of $L$ with at most $$2(r-t+1)+2=2r-2t+4$$ vertices and at most $3^{r-t+2}-1$ simplexes. Let $Z\subseteq K$ denote the 0-dimensional subcomplex of $K$ with vertices $v_1,v_2,\ldots, v_t$. Since $L'\subseteq L=\st_K(Z)$, the complex $L'\join Z$ is a subcomplex of $K$ (see Example \ref{ex4}). Moreover, the number of vertices of $L'\join Z$ is at most $$2r-2t+4+t=2r-t+4\le 2r+2,$$ and the number of simplexes of $L'\join Z$
is at most $$(3^{r-t+2}-1+1)(t+1)-1=3^{r+1}3^{1-t}(t+1)-1\le 3^{r+1}-1,$$ since $t\ge 2$. By induction hypothesis, using the property $Q_r$ of $K$, we see that the complex $L'\join Z$ is contained in the star $\st_K(v)$ of a vertex $v\in K$. In other words, $L'\join Z\join v \subseteq K$ and thus 
$L'\join v \subseteq \st_K(Z)=L$, which means that $v\in L$ and $L' \subseteq \st_L(v)$. We see that the complex $L$ has property $Q_{r-t+1}$ and hence it is $(r-t+1)$-connected by induction. 

The Nerve Lemma is now applicable and yields the desired result. 
\end{proof}

\section{Random simplicial complexes and the property $Q_r$}\label{sec:3}

\subsection{Random simplicial complexes}\label{sec:31} In this paper we consider the lower multi-para\-meter model of random simplexes described in \cite{FMN} which we shall now briefly recall.
The symbol 
$\Delta_n$ denotes the standard simplex on the vertex set $[n]=\{1, 2, \dots, n\}$ and a random simplicial complex in this model is a probability measure 
$\mathcal P$ on the set of all simplicial subcomplexes of $\Delta_n$. The measure 
$\mathcal P$ depends on a system of probability parameters 
\begin{eqnarray*}\label{psigma0}
p_\sigma\in [0,1]
\end{eqnarray*} 
associated with each simplex $\sigma$ of $\Delta_n$. 
The probability measure $\mathcal P$ is uniquely characterised by the property that for every simplicial complex $L\subset \Delta_n$ the probability that a random complex $X\subset \Delta_n$ contains $L$ equals 
\begin{eqnarray}\label{eq:x}
\mathcal P\{X; L\subset X\} = \prod_{\sigma\in L} p_\sigma;
\end{eqnarray}
we refer to Corollary 5.3 from \cite{FMN}. 

In this section we prove the following result. 

\begin{teo}\label{thm8}
Suppose that all probability parameters $p_\sigma$ satisfy 
\begin{eqnarray}\label{psigma}
0<p\le p_\sigma, 
\end{eqnarray}
where $p>0$ is a constant independent of $n$. Then for any integer $r=r(n)\ge 0$ satisfying
\begin{eqnarray}\label{eq:5}
r+1 \le \log_3\log_{q^{2}}n, \quad \mbox{where}\quad q=p^{-1},
\end{eqnarray}
a random simplicial complex $X\subset \Delta_n$ in the lower multi-parameter model possesses property $Q_r$, a.a.s. 
In other words, under conditions (\ref{psigma}) and (\ref{eq:5}) the $\mathcal P$-measure of the set of simplicial complexes $X\subset \Delta_n$ having property $Q_r$ tends to $1$ as $n$ tends to $\infty$.
\end{teo}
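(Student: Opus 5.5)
We argue by a first-moment (union) bound over all candidate subcomplexes $L$. Fix a subcomplex $L\subseteq\Delta_n$ with at most $2r+2$ vertices and at most $3^{r+1}-1$ simplexes. The event that $L\subseteq X$ but $L\not\subseteq \st_X(v)$ for every vertex $v\in X$ is contained in the event that $L\subseteq X$ and, for every $v\in[n]$ outside the vertex set $V(L)$, the cone $v\circledast L$ is not contained in $X$ (see Example \ref{ex4}). Hence the failure of $Q_r$ is contained in the union over such $L$ of these events. The plan is to bound the probability of each event and then sum.

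For a single $L$, consider the cones $v\circledast L$ for $v\notin V(L)$. The new simplexes $v\cup\sigma$ ($\sigma\in L\cup\{\emptyset\}$) are distinct for distinct $v$, so these sets of simplexes are pairwise disjoint. Let $s$ be the number of simplexes of $L$. By (\ref{eq:x}), conditional on $L\subseteq X$, the probability that $v\circledast L\subseteq X$ equals $\prod_{\sigma}p_{v\cup\sigma}\ge p^{s+1}$. In the lower model these conditional events for different $v$ are independent, since each is determined by the disjoint sets of simplexes attached at $v$ and $L$ is already present. I will justify this independence via the standard description of the lower model as independent coin flips on simplexes, keeping a simplex iff all of its faces also survive. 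With $s+1\le 3^{r+1}$, we get
\begin{eqnarray*}
\mathcal P\{L\subseteq X,\ L\not\subseteq \st_X(v)\ \forall v\}\le \bigl(1-p^{3^{r+1}}\bigr)^{n-2r-2}\le \exp\bigl(-(n-2r-2)p^{3^{r+1}}\bigr).
\end{eqnarray*}

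Next we count the candidates $L$. Choosing at most $2r+2$ vertices gives at most $n^{2r+2}$ options. On a fixed vertex set of size at most $2r+2$ there are at most $2^{4^{r+1}}$ subcomplexes. So the total failure probability is at most
\begin{eqnarray*}
n^{2r+2}\,2^{4^{r+1}}\exp\bigl(-(n-2r-2)p^{3^{r+1}}\bigr).
\end{eqnarray*}
Condition (\ref{eq:5}) gives $3^{r+1}\le \log_{q^2}n$, hence $p^{3^{r+1}}\ge n^{-1/2}$, so the exponential term is at most $\exp(-c\sqrt n)$. Also $r=O(\log\log n)$, so $n^{2r+2}=\exp(O(\log n\log\log n))$ is negligible. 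The remaining term $2^{4^{r+1}}$ needs care: $4^{r+1}=(3^{r+1})^{\log_3 4}\le(\log_{q^2}n)^{1.27}$, which is $o(\sqrt n)$. So the product tends to $0$.

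The main obstacle is the independence and conditioning step in the lower model, where one must check carefully that (\ref{eq:x}) yields a product bound for the joint non-occurrence of the disjoint cones given $L\subseteq X$. If the independence claim is awkward to justify, I would instead use a weaker fallback: Harris/FKG is unsuitable here, so I would restrict to a fixed set of $\lfloor n/2\rfloor$ vertices $v$. The bookkeeping on the $2^{4^{r+1}}$ term is the other place where the exact form of (\ref{eq:5}), with $q^2$ rather than $q$, matters.
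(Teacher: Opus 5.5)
Your proposal is correct and follows the paper's proof essentially step for step. The steps match: a union bound over candidate $L$, conditional independence of the cone events $\{v\circledast L\subseteq X\}$ given $\{L\subseteq X\}$ yielding the factor $(1-p^{3^{r+1}})^{n-2r-2}$, the count $n^{2r+2}\cdot 2^{4^{r+1}}$ of candidates, and $p^{3^{r+1}}\ge n^{-1/2}$ from (\ref{eq:5}). Your coin-flip justification of independence is valid, being equivalent to the paper's computation $\mathcal P(\mathcal C_{L\ast J})/\mathcal P(\mathcal C_L)=\prod_{v\in J}p_v\prod_{\sigma\in L}p_{\sigma v}$ via (\ref{eq:x}), so the fallback you mention is unnecessary.
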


\begin{proof} We estimate above the probability that a random simplicial complex $X\subset \Delta_n$ does not have property $Q_r$.

We shall denote by $V_L$ the set of all vertexes of a simplicial complex $L$. Besides, the symbol $F(L)$ will denote the set of all simplexes of $L$. 

For a non-starlike simplicial complex $L\subset \Delta_n$ (see Remark \ref{rk1}, part (5)) we denote by $\mathcal C_L$ the set of all simplicial complexes $X \subset \Delta_n$ containing $L$; the probability 
of $\mathcal C_L$ is given by (\ref{eq:x}). For a vertex $v\in [n]-V_L$ we have $L\subset L\cup v\subset L\ast v$ and 
$ {\mathcal C}_{L\ast v}\subset \mathcal C_{L\cup v}\subset {\mathcal C}_L$. 
Any simplicial complex $$X\in \mathcal C_{L}- \mathcal C_{L\ast v}$$ contains $L$ as a subcomplex but $L\ast v\not\subset X$, i.e. either $v\notin X$, or $v\in X$ and $L\not\subset \st_X(v)$. 
Thus, we see, that the set ${\mathcal {NQ}}_r$ of all simplicial complexes $X\subset \Delta_n$ which do not possess property $Q_r$ satisfies
\begin{eqnarray*}\label{eq:noqr}
{\mathcal {NQ}}_r \, 
\subset  \, 
\bigcup_L \bigcap_{v\in [n]-V_L}\left(\mathcal C_{L}-\mathcal C_{L\ast v}\right),
\end{eqnarray*}
where $L\subset \Delta_n$ runs over all not-starlike simplicial subcomplexes with at most $ 2r+2$ vertexes and at most $3^{r+1}-1$ simplexes. 
Hence we have
\begin{eqnarray}\label{eq:7}
\mathcal P(\mathcal {NQ}_r) &\le& 
\sum_L \, \mathcal P\left(\bigcap_{v\in [n]-V_L}(\mathcal C_{L}-\mathcal C_{L\ast v})\right) \nonumber\\ 
&=&
\sum_L \, \mathcal P\left(\left(\bigcap_{v\in [n]-V_L}(\mathcal C_{L}-\mathcal C_{L\ast v})\right)\, \big|\, \mathcal C_L\right)\cdot \mathcal P(\mathcal C_L).
\end{eqnarray}
The symbol $$\mathcal P\left(\left(\bigcap_{v\in [n]-V_L}(\mathcal C_{L}-\mathcal C_{L\ast v})\right)\, |\, \mathcal C_L\right)$$ denotes the conditional probability
of the event 
$\bigcap_{v\in [n]-V_L}(\mathcal C_{L}-\mathcal C_{L\ast v})$ assuming $\mathcal C_L$. 

The conditional probability $\mathcal P(\mathcal C_{L\ast v} \, |\, \mathcal C_L)$ equals 
$
p_v \prod_{\sigma\in L}p_{v\sigma}
$
as follows directly from formula (\ref{eq:x}). 
Moreover, we claim that 
\begin{eqnarray}\label{eq:cond}
\mathcal P\left(\left(\bigcap_{v\in [n]-V_L}(\mathcal C_{L}-\mathcal C_{L\ast v})\right)\, |\, \,  \mathcal C_L\right)= \prod_{v\in [n]-V_L}\left( 1- p_v\prod_{\sigma\in L}p_{\sigma v}\right).
\end{eqnarray}
To prove this we observe that the family of events $\{\mathcal C_{L\ast v}\}_{v\in [n]-V_L}$ is collectively conditionally independent 
(see \cite{Shi}, p. 36) over $\mathcal C_L$. 
Indeed, for every subset $J\subset [n]-V_L$  one can compute
the conditional probability of the intersection as follows
$$
\mathcal P\left(\bigcap_{v\in J}\mathcal C_{L\ast v}\, |\, \mathcal C_L\right)= \mathcal P(\mathcal C_{L_J}\, | \, \mathcal C_L)= \frac{\mathcal P(\mathcal C_{L_J})}{
\mathcal P( \mathcal C_L)}=
\prod_{v\in J}\left( p_v\prod_{\sigma\in L}p_{\sigma v}\right),
$$
where $L_J$ denotes the simplicial complex $L_J=\cup_{v\in J} L\ast v=L\ast J$ and probabilities above are calculated using the formula (\ref{eq:x}). 

This implies that the family of events $$\{\mathcal C_L -\mathcal C_{L\ast v}\}_{v\in [n]-V_L}$$
is also collectively conditionally independent over $\mathcal C_L$ and gives formula (\ref{eq:cond}). 

We can now use (\ref{eq:7}) and (\ref{eq:cond}) and our assumption (\ref{psigma}) to obtain
\begin{eqnarray}\label{eq:9}
\mathcal P(\mathcal {NQ}_r)&\le& \sum_L \prod_{v\in [n]-V_L} (1-p_v\prod_{\sigma\in L}p_{v\sigma})\nonumber\\
&\le & \binom n {2r+2} \cdot 2^{2^{2r+2}}\cdot (1- p^{3^{r+1}})^{n-2r-2}\nonumber\\
&\le & n^{2r+2}\cdot 2^{2^{2r+2}} \cdot 
(1-p^{3^{r+1}})^{n-2r-2}.
\end{eqnarray}
In the sum above $L$ runs over all non-starlike subcomplexes $L\subset \Delta_n$ with at most $2r+2$ vertices satisfying $|F(L)|\le 3^{r+1}-1$. 
The number $\binom{n}{2r+2}\cdot 2^{2^{2r+2}}$, which appears above, is an upper bound for the number of subcomplexes with at most $2r+2$ vertices.

Because of the assumption (\ref{eq:5}) we have
$$
3^{r+1}\le \log_{(p^{-2})}n \quad \mbox{and}\quad p^{3^{r+1}} \ge p^{\log_{(p^{-2})}n} = n^{-1/2}.
$$
Hence, using (\ref{eq:9}) and the inequality $\ln (1-x)\le -x$ for $x\in (0,1)$, we have
\begin{eqnarray}\label{eq:9c}
\ln \mathcal P(\mathcal {NQ}_r) &\le & (2r+2)\cdot \ln n+ 2^{2r+2}\cdot \ln 2- (n-2r-2)\cdot n^{-1/2}\nonumber \\
&\le &(2r+2)\cdot \ln n+ 2^{2r+2}\cdot \ln 2- 1/2\cdot n^{1/2},
\end{eqnarray}
since $n-2r-2 \ge \frac{n}{2}$ for large $n$. From (\ref{eq:5}) we easily see that $2^{2r+2}\le (\log_{q^{2}}n)^2$ and hence the last term in (\ref{eq:9c}) dominates the other terms.
We conclude that the logarithm $\ln \mathcal P(\mathcal {NQ}_r)$ tends to $-\infty$ and therefore the probability 
$\mathcal P(\mathcal {NQ}_r)$ tends to $0$. 

This completes the proof. 
\end{proof}

Combining Theorem \ref{thm:6} and Theorem \ref{thm8} we obtain:

\begin{corollary}\label{cor:conn}
Under assumption (\ref{psigma}) a random simplicial complex $X\subset \Delta_n$ in the lower model is 
$\lfloor \log_3\log_{q^{2}} n -1\rfloor$-connected, a.a.s.. 
\end{corollary}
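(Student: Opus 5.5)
The plan is to combine Theorem \ref{thm8} and Theorem \ref{thm:6} directly, with one choice of the integer parameter $r$. Set
$$
r=r(n)=\lfloor \log_3\log_{q^{2}} n -1\rfloor .
$$
For all sufficiently large $n$ we have $\log_{q^2} n\ge 3$, hence $\log_3\log_{q^2}n\ge 1$, so $r\ge 0$. By the definition of the floor function, $r\le \log_3\log_{q^2}n-1$, that is,
$$
r+1\le \log_3\log_{q^2}n .
$$
This is precisely condition (\ref{eq:5}). Together with assumption (\ref{psigma}), the hypotheses of Theorem \ref{thm8} hold for this sequence $r(n)$. Theorem \ref{thm8} therefore yields that the $\mathcal P$-measure of the set of complexes $X\subset\Delta_n$ having property $Q_{r(n)}$ tends to $1$ as $n\to\infty$.

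Next apply Theorem \ref{thm:6}, which is deterministic: every simplicial complex with property $Q_r$ is $r$-connected. So the event ``$X$ is $r(n)$-connected'' contains the event ``$X$ has $Q_{r(n)}$'', and its probability is at least as large. Hence it also tends to $1$, which is the claim.

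There is no serious obstacle. The only point needing care is that Theorem \ref{thm8} was proved with $r$ allowed to depend on $n$. Indeed, in its proof the bound (\ref{eq:9c}) only uses (\ref{eq:5}) for each $n$ separately. So it applies to our sequence $r(n)$, which grows with $n$. The finitely many small $n$ with $\log_{q^2}n<3$, where $r$ could be negative, do not affect an asymptotic statement. Note also that $p\le 1$, and if $p=1$ the statement is degenerate: the complex is then the full simplex, which is contractible. So we may take $q>1$, so that the logarithms make sense.
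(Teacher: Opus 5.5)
Your proof is correct and is exactly the paper's argument: the corollary follows by taking $r(n)=\lfloor \log_3\log_{q^2}n-1\rfloor$, which satisfies (\ref{eq:5}), in Theorem~\ref{thm8}, and then applying the deterministic Theorem~\ref{thm:6}. Your side remarks, that $r(n)\ge 0$ for large $n$ and that one needs $p<1$ so that $q>1$, only make explicit what the paper leaves implicit.
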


\section{Iterated suspensions}

In this section we establish conditions which guarantee that a finite  CW-complex has homotopy type of an iterated suspension. In the following sections we apply these results to random simplicial complexes. 

\begin{lema}\label{lm1b}
Let $X$ be a $(c-1)$-connected CW-complex with $c\ge 3$ and $\dim X = d$. 
If $X$ is homotopy equivalent to a suspension, i.e. $X\simeq \Sigma Y$ for some CW-complex $Y$, 
then $X\simeq \Sigma Y'$ for some $(c-2)$-connected
CW-complex $Y'$ satisfying 
$\dim Y'\le \max\{d-1, 3\}$. 
\end{lema}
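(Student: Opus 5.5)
Since $X\simeq \Sigma Y$ and $X$ is $(c-1)$-connected with $c\ge 3$, I first record the properties of $Y$ that follow from this. Because $X$ is simply connected, the Freudenthal/homology suspension isomorphism gives $\tilde H_i(Y)\cong \tilde H_{i+1}(X)$. Hence $\tilde H_i(Y)=0$ for $i\le c-2$ and $\tilde H_i(Y)=0$ for $i\ge d$; moreover $H_{d-1}(Y)\cong H_d(X)$ is free abelian, since $X$ has dimension $d$. The difficulty is that $Y$ itself need not be simply connected, and its dimension may be arbitrary. The plan is to replace $Y$ by a nicer space $Y'$ with $\Sigma Y'\simeq \Sigma Y$.

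The key step is to pass from $Y$ to a simply connected space with the same homology and homotopy equivalent suspension. First, $\pi_1(Y)$ has trivial abelianisation, because $H_1(Y)=0$ when $c\ge 3$. Apply the Quillen plus construction to $Y$ with respect to the perfect group $\pi_1(Y)$. This gives a map $Y\to Y^+$ that is a homology isomorphism (integrally), and $Y^+$ is simply connected. The map $\Sigma Y\to\Sigma Y^+$ is then a homology isomorphism between simply connected spaces, hence a homotopy equivalence by Whitehead's theorem. Thus we may assume $Y$ is simply connected. Now $Y$ is simply connected with $\tilde H_i(Y)=0$ for $i\le c-2$, so by Hurewicz it is $(c-2)$-connected. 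This gives the connectivity part of the statement.

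For the dimension bound, I would use Wall's finiteness results for simply connected spaces. A simply connected CW-complex $Y$ with $H_i(Y)=0$ for $i>n$ and $H_n(Y)$ free is homotopy equivalent to an $n$-dimensional CW-complex, provided $n\ge 3$. This follows from the standard minimal cell structure: a homology decomposition, or Wall's theorem that cohomological dimension $\le n$ implies geometric dimension $\le n$ for $n\ge 3$. Here the relevant $n$ is $\max\{d-1,3\}$. Since $H_{d-1}(Y)$ is free and higher homology vanishes, cohomological dimension is at most $d-1$. When $d-1\le 2$, we can still only guarantee dimension $3$, which is why the $\max$ appears. Take $Y'$ to be this complex. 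It is $(c-2)$-connected and satisfies $\Sigma Y'\simeq\Sigma Y\simeq X$.

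The main obstacle is the middle step, replacing $Y$ by a simply connected space without changing $\Sigma Y$. The plus construction is the tool I would try first. One must check that it applies to the whole (perfect) fundamental group and that suspension kills the difference. An alternative avoids the plus construction. Since $X$ is simply connected with known homology, one could build directly a simply connected Moore-type complex $Y'$ with $\tilde H_*(Y')=\tilde H_{*+1}(X)$. One would then construct a map $\Sigma Y'\to X$ inducing a homology isomorphism, using that $X\simeq\Sigma Y$ is a co-H-space. That construction is harder to carry out, so I would fall back on it only if the plus-construction route fails.
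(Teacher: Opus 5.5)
Your proposal is correct and follows essentially the same route as the paper: the plus construction on the perfect group $\pi_1(Y)$, Whitehead's theorem applied to $\Sigma Y\to\Sigma Y^+$, Hurewicz for the $(c-2)$-connectivity, and Wall's finiteness theorem (Theorem E) for the bound $\dim Y'\le\max\{d-1,3\}$. Your check that $H_{d-1}(Y^+)\cong H_d(X)$ is free is a welcome detail that the paper leaves implicit: Wall's criterion needs $H^{n+1}$ to vanish for all coefficients, not just $H_i=0$ for $i>n$.
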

\begin{proof}
If $X\simeq \Sigma Y$ then $\tilde H_0(Y)= H_1(X)=0$ and $H_1(Y)= H_2(X)=0$. We see that $Y$ is path-connected and its fundamental group 
$\pi_1(Y)$ is perfect. Applying the plus construction, see \cite{Hatcher}, chapter 4, Proposition 4.40, we obtain a simply connected CW-complex 
$Y^+$ and a map $f: Y\to Y^+$ inducing isomorphism of the homology groups. Hence, $Y^+$ is $(c-2)$-connected. 

By construction, $\dim Y^+\le \max\{\dim Y, 3\}$. 
By Whitehead theorem, the suspension map $\Sigma f: \Sigma Y\to \Sigma (Y^+)$ is a homotopy equivalence and thus we see that $X$ is homotopy equivalent to $\Sigma (Y^+)$. We have $H_i(Y^+)=H_{i+1}(X)=0$ for $i>d-1$. Applying Theorem E of \cite{Wall} we obtain that the complex $Y^+$ is homotopy equivalent to a CW-complex of dimension at most $\max\{d-1, 3\}$. 
\end{proof}

Recall that  Lusternik - Schnirelmann category $\cat(X)$ of a topological space $X$ 
is defined as the smallest integer $k\ge 0$ such that $X$ admits and open cover 
$$X=U_0\cup U_1\cup \dots\cup U_k$$ with the property that each inclusion $U_i\to X$ is null-homotopic, where $i=0, 1, \dots, k$.  
This is {\it the reduced version} of the category which is smaller by one compared to the classical unreduced version. 

The category $\cat(X)$ is a homotopy invariant and for simplicial complexes it is bounded above by the dimension $\cat(X)\le \dim(X)$. Moreover, for an $r$-connected simplicial complex (where $r\ge 0$) one has a stronger upper bound
\begin{eqnarray}\label{dimupper}
\cat(X) \le \frac{\dim(X)}{r+1},
\end{eqnarray}
see \cite{Gro}, \cite{CLOT}.

\begin{teo}\label{thmb}
Let $X$ be a $(c-1)$-connected CW-complex of dimension $d$. If $c\ge 2$ and 
\begin{eqnarray}\label{eq:1}
2c-d\ge k\ge 1,
\end{eqnarray} 
then $X$ has homotopy type of a $k$-iterated suspension,
$\Sigma^k Z$, for some CW-complex $Z$. 
\end{teo}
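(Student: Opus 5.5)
Induction on $k$, with Lemma \ref{lm1b} carrying the suspension down one level at a time. The base case $k=1$ is the classical fact that in the stable range a highly connected complex is a suspension. From $2c-d\ge 1$ we get $d\le 2c-1$, and $X$ is $(c-1)$-connected with $c\ge 2$, so $X$ is simply connected. By minimal cell structures (Wall, or homology decompositions for simply connected complexes) we may replace $X$ by a CW-complex with cells only in dimensions $c,\dots,d$. Freudenthal's theorem then makes $\Sigma\Omega X\to X$ roughly $(2c-1)$-connected, since $\Omega X$ is $(c-2)$-connected. Using $d\le 2c-1$, obstruction theory builds a section of the evaluation map $\Sigma\Omega X\to X$ up to homotopy. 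This is the step where the hypothesis $d\le 2c-1$ enters. In other words, $X$ is a co-H-space.

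A simply connected co-H-space that is a finite complex is homotopy equivalent to a suspension. Concretely, I would instead build $X$ directly as $\Sigma Y$, inductively over the skeleta. Each attaching map $S^{m-1}\to X^{(m-1)}$ with $m\le d\le 2c-1$ lies in the Freudenthal stable range for the $(c-1)$-connected skeleton $X^{(m-1)}\simeq\Sigma Y_{m-1}$, where $Y_{m-1}$ is $(c-2)$-connected. Precisely, $\pi_{m-1}(\Sigma Y_{m-1})$ is in the image of $\Sigma:\pi_{m-2}(Y_{m-1})\to\pi_{m-1}(\Sigma Y_{m-1})$ because $m-1\le 2(c-1)$. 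So each attaching map desuspends, and attaching the desuspended cell gives $Y_m$ with $\Sigma Y_m\simeq X^{(m)}$. I expect this skeletal desuspension to be the main point. It also yields $\dim Y\le d-1$ and shows $Y$ is $(c-2)$-connected, which makes Lemma \ref{lm1b} partly unnecessary, though the lemma can be used for the dimension and connectivity bookkeeping.

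For the inductive step, suppose $2c-d\ge k\ge 2$. The case $k=1$ gives $X\simeq\Sigma Y$ with $Y$ $(c-2)$-connected. Applying Lemma \ref{lm1b} when $c\ge3$, or the skeletal construction directly, we may take $\dim Y\le d-1$. The lemma gives $\max\{d-1,3\}$, but if $d\le 4$ with $c\ge 3$ we have $X\simeq S^3$, $S^4$ or a point-like wedge and can argue by hand. Now $Y$ is $(c'-1)$-connected with $c'=c-1$ and dimension $d'\le d-1$, so $2c'-d'\ge 2c-2-d+1=2c-d-1\ge k-1\ge1$. We also need $c'\ge 2$, which follows from $k\ge 2$: we have $2c\ge d+2$, and since $d\ge c$ for nontrivial $X$ this gives $c\ge 2$, but we need $c\ge3$. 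If $c=2$ then $d\le 2$, and $X$ is a wedge of $2$-spheres, which is clearly $\Sigma^k$ of a wedge of spheres or points. (If $X$ is contractible, take $Z$ to be a point.) Induction then gives $Y\simeq\Sigma^{k-1}Z$, hence $X\simeq\Sigma^k Z$.

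The main obstacle is the base case desuspension. I would handle it by the skeletal argument above, checking carefully that the Freudenthal range $m-1\le 2(c-1)$ holds for every cell dimension $m\le d\le 2c-1$. The degenerate low-dimensional cases in the induction are routine.
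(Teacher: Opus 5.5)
Your proof is correct. Its inductive step is essentially the paper's: split off one suspension, then apply the induction hypothesis to $Y$ with $c'=c-1$, $\dim Y\le d-1$, $k'=k-1$. The paper disposes of the degenerate case by treating $d=c$ first, after which $k\ge 2$ forces $c\ge 3$ and $d\ge 4$; this plays the role of your separate treatment of $c=2$.

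The genuine difference is the base case $k=1$.

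\begin{itemize}
\item The paper argues entirely through Lusternik--Schnirelmann category. It gets $\cat(X)\le d/c<2$, and Ganea's theorem (using $c\ge 2$) upgrades this to strong category $\Cat(X)\le 1$, which is equivalent to $X$ being a suspension. This is short and stays inside the framework the paper needs later anyway. However, it gives no information about $Y$. That is why the paper needs Lemma \ref{lm1b} (plus construction and Wall's finiteness theorem) to recover that $Y$ is $(c-2)$-connected with $\dim Y\le\max\{d-1,3\}$.
\item Your route is the classical elementary proof: a minimal cell structure, Freudenthal surjectivity $\pi_{m-2}(Y_{m-1})\to\pi_{m-1}(\Sigma Y_{m-1})$ for $m\le 2c-1$, and homotopy invariance of cell attachment. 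It produces $Y$ with cells only in dimensions $0$ and $c-1,\dots,d-1$. So the connectivity of $Y$ and $\dim Y\le d-1$ come for free, and Lemma \ref{lm1b}, with its exceptional bound $3$, becomes superfluous.
\end{itemize}

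Two asides in your write-up should be deleted, though neither affects the argument once you rely on the skeletal bound for $\dim Y$.

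\begin{itemize}
\item The sentence claiming that a simply connected finite co-H-space is always a suspension cannot be used as a general fact. The passage from $\cat\le 1$ (co-H) to $\Cat\le 1$ (suspension) is exactly what the paper needs Ganea's theorem and its connectivity hypothesis for.
\item Your description of the exceptional case for Lemma \ref{lm1b} is misstated. Already for $d=4$ one has $\max\{d-1,3\}=d-1$. Moreover, for $d=4$ the space need not be a wedge of spheres (e.g.\ $S^3\cup_m e^4$). The only real exception is $d\le 3$, where $X$ is a wedge of $3$-spheres or contractible.
\end{itemize}
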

\begin{proof}  In the case when $d=c$ the complex $X$ is homotopy equivalent to a wedge of $c$-dimensional spheres and then $X\simeq \Sigma^c Z$ for some $Z$, i.e. Theorem \ref{thmb} is true.  

We assume below that $d>c$ and act by induction in $k$ starting with the case $k=1$. 

In the case $k=1$ our assumption (\ref{eq:1}) means that $d=\dim X \le 2c-1$. 
Hence, the Lusternik-Schnirelmann category $\cat(X)$ satisfies 
$$
\cat(X)\le \frac{\dim X}{c} \le 2-\frac{1}{c}<2,
$$
i.e. $\cat(X)\le 1$. Applying Theorem 1.3 from \cite{Ganea} and using the assumption $c\ge 2$ we get the upper bound $\Cat(X)\le 1$ for the strong category. 
Thus, $X$ has homotopy type of a suspension, $X\simeq \Sigma (Y)$, see Proposition 3.16 from \cite{CLOT}. 
This proves our statement for $k=1$. 

Suppose now that $k\ge 2$ and assume that Theorem \ref{thmb} has been proven for all smaller values of $k$. If $X$ is $(c-1)$-connected, and $d=\dim X > c$ satisfies
$2c-d\ge k\ge 2$, then $2c \ge 2+d\ge 3+c$, i.e. $c\ge 3$. Besides, $d>c\ge 3$ implies $d>3$. 
Repeating the arguments above we see that $X$ is homotopy equivalent to a suspension, and 
applying Lemma \ref{lm1b}, we find that $X\simeq \Sigma Y',$ where $Y'$ is a $(c'-1)$-connected CW-complex of dimension $d'=d-1$ and $c'=c-1$. 
Since 
$$
2c'-d'=2c-d-1\ge k-1,
$$
we may apply induction and get $Y'\simeq \Sigma^{k-1}Z$ for some $Z$. Thus, $X\simeq \Sigma^kZ$ as claimed. 

This completes the proof. 
\end{proof}

%
%
%
%

\section{Random simplicial complexes in the medial regime as iterated suspensions}


{\it The medial regime} \cite{FM} is a special case of the probabilistic model described in \S \ref{sec:31} in which the probability parameters $p_\sigma$ satisfy the inequalities
\begin{eqnarray}\label{eq:11}
0<p\, \le\, p_\sigma\, \le\,  P<1,
\end{eqnarray}
where $0<p\le P<1$ are constants independent of $n$. 
In other words, as in \S \ref{sec:3}, we are dealing with the probability measure $\mathcal P$ on the set of all simplicial subcomplexes $X\subset \Delta_n$ in the lower multi-parameter model which can be uniquely characterised by the property (\ref{eq:x}) in which $p_\sigma\in [0,1]$ are probability parameters associated to each simplex $\sigma\subset \Delta_n$ which satisfy (\ref{eq:11}), i.e. they are not allowed to approach $0$ and $1$. 
We shall use the following result established in Theorem 3.1 of \cite{FM}:
\begin{teo}\label{thm10}
The dimension of a random simplicial complex $X\subset \Delta_n$ in the medial regime satisfies 
\begin{eqnarray*}\label{dimupper1}
\dim(X)  \, < \,  \log_2\ln n +\log_2\log_2\ln n- \log_2 A, \quad \mbox{where}\quad P=e^{-A},
\end{eqnarray*}
asymptotically almost surely.
\end{teo}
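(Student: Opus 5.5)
The plan is a first moment argument on the number of top-dimensional simplexes. Write $\beta(n)=\log_2\ln n+\log_2\log_2\ln n-\log_2 A$ and let $k=k(n)$ be the smallest integer with $k\ge \beta(n)$. Then $\dim X<\beta(n)$ fails exactly when $\dim X\ge k$. Since $X$ is a simplicial complex, $\dim X\ge k$ forces $X$ to contain some $k$-dimensional simplex $\sigma\subset\Delta_n$. So it suffices to show that the expected number $\E f_k(X)$ of $k$-simplexes of $X$ tends to $0$, and then apply Markov's inequality: $\mathcal P(\dim X\ge k)\le \E f_k(X)$.

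First I bound the probability that a fixed $k$-simplex $\sigma$ lies in $X$. By formula (\ref{eq:x}), applied with $L$ the full simplex $\sigma$ (which has $2^{k+1}-1$ faces), and by the upper bound $p_\tau\le P=e^{-A}$ from (\ref{eq:11}),
$$\mathcal P(\sigma\subset X)=\prod_{\tau\subseteq\sigma}p_\tau\le e^{-A(2^{k+1}-1)}.$$
Summing over the $\binom{n}{k+1}\le n^{k+1}$ choices of $\sigma$ gives
$$\ln \E f_k(X)\le (k+1)\ln n - A(2^{k+1}-1).$$

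The remaining step is the asymptotic comparison, and it is the only delicate point. Since $k\ge\beta(n)$, we have $2^{k}\ge \ln n\cdot\log_2\ln n/A$, so
$$A\,2^{k+1}\ge 2\ln n\,\log_2\ln n.$$
On the other hand, $k<\beta(n)+1$, so $k+1\le \log_2\ln n+O(\log_2\log_2\ln n)$, and therefore
$$(k+1)\ln n\le \ln n\,\log_2\ln n\,(1+o(1)).$$
Hence
$$\ln\E f_k(X)\le -\ln n\,\log_2\ln n\,(1-o(1))+A\to-\infty.$$
It follows that $\mathcal P(\dim X\ge k)\to0$, which is the claim.

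The main obstacle is that the $\log_2\log_2\ln n$ term is exactly what makes the argument work. It doubles $2^{k+1}$ relative to the $(k+1)\ln n$ term, giving a factor $2$ against a factor $1+o(1)$. So the integer rounding of $k$ has to be handled carefully, always rounding up so that $2^k\ge 2^{\beta(n)}$, and the lower-order terms in $k+1$ have to be checked to be genuinely $o(\log_2\ln n)$.
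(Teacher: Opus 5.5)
Your proof is correct and complete. The reduction $\{\dim X\ge \beta(n)\}=\{f_k(X)\ge 1\}$ with $k=\lceil\beta(n)\rceil$ holds. The bound $\mathcal P(\sigma\subset X)\le P^{2^{k+1}-1}$ follows from (\ref{eq:x}) and (\ref{eq:11}). And the comparison of $A\,2^{k+1}\ge 2\ln n\,\log_2\ln n$ against $(k+1)\ln n=(1+o(1))\ln n\,\log_2\ln n$ checks out, giving $\ln \E f_k\to-\infty$ and hence the claim by Markov's inequality.

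The paper gives no proof of this statement; it imports it as Theorem 3.1 of \cite{FM}. Your first-moment computation is therefore a self-contained substitute for that citation rather than a different route through the paper's own argument.

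Two by-products of your approach are worth noting. First, it uses only the upper bound $p_\sigma\le P$ from (\ref{eq:11}), never the lower bound $p_\sigma\ge p$. Second, the factor-$2$ margin you identified means the same computation yields $\dim X<\beta(n)-1+\epsilon$ a.a.s. for any fixed $\epsilon>0$. To see this, take $k=\lceil\beta(n)-1+\epsilon\rceil$, so that $A\,2^{k+1}\ge 2^{\epsilon}\ln n\,\log_2\ln n$. So the stated bound is not tight for this method.
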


We may apply Corollary \ref{cor:conn} 
and find the level of connectivity of a random simplicial complex in the medial regime, a.a.s. 
Indeed, writing $p=e^{-a}$, where $a>A>0$, we find that 
$$
\log_3\log_{q^2}n= \log_3 2\cdot \log_2\ln n - \log_3(2a), 
$$
where $\log_3 2\sim 0.63092975$. Thus, we see that a random simplicial complex $X\subset \Delta_n$ in the medial regime is $(c-1)$-connected, a.a.s.,
with 
\begin{eqnarray*}\label{c}
c=\lfloor  
  \log_3 2\cdot \log_2\ln n - \log_3(2a)  
\rfloor.
\end{eqnarray*}
\begin{teo}\label{thm14}
A random simplicial complex $X\subset \Delta_n$ in the medial regime has homotopy type of a $k$-fold iterated suspension, where $k=\lfloor 0.26 \cdot \log_2\ln n\rfloor$, a.a.s.
\end{teo}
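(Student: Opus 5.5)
The plan is to apply Theorem \ref{thmb} to the random complex $X$, using the connectivity bound from Corollary \ref{cor:conn} and the dimension bound from Theorem \ref{thm10}. We then check that $2c-d\ge k$ with $k=\lfloor 0.26\cdot\log_2\ln n\rfloor$ for all large $n$, a.a.s. Both properties hold a.a.s., so their intersection holds a.a.s. On that event $X$ is $(c-1)$-connected with
$$c=\lfloor \log_3 2\cdot \log_2\ln n - \log_3(2a)\rfloor .$$
Here $p=e^{-a}$ is the lower constant in (\ref{eq:11}), which also satisfies (\ref{psigma}), so Corollary \ref{cor:conn} applies. On the same event we have
$$d=\dim X<\log_2\ln n+\log_2\log_2\ln n-\log_2 A,$$
where $P=e^{-A}$.

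Next we verify the hypotheses of Theorem \ref{thmb}. The condition $c\ge 2$ is clear for large $n$, since $c$ grows like $0.63\log_2\ln n$. Write $x=\log_2\ln n$. Then
$$2c-d\;\ge\; 2(\log_3 2)\,x-2-2\log_3(2a)-x-\log_2 x+\log_2 A=(2\log_3 2-1)\,x-\log_2 x-C,$$
where $C$ is a constant depending only on $a$ and $A$. Since $2\log_3 2-1\approx 0.26186>0.26$, the difference $(0.26186-0.26)x-\log_2 x-C$ tends to $+\infty$. Hence $2c-d\ge 0.26\,x\ge k$ for all sufficiently large $n$, and also $k\ge 1$ for large $n$. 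Theorem \ref{thmb} then gives $X\simeq\Sigma^k Z$.

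One degenerate case needs attention. If $d<c$, then $X$ is contractible, and Theorem \ref{thmb} formally assumes $\dim X=d$ with $d\ge c$ in its proof. In that case $X\simeq \Sigma^k(\mathrm{pt})$ trivially (or $\Sigma^k$ of a contractible space), so the conclusion still holds. The same remark covers $d=c$, which the proof of Theorem \ref{thmb} already handles.

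The main obstacle is the numerical bookkeeping in the second paragraph. The margin $2\log_3 2-1-0.26\approx 0.0019$ is tiny, so we must make sure the subleading terms are only $O(\log\log\ln n)$ and $O(1)$, which they are. We must also make sure the floor in $c$ costs at most $2$ after doubling. Both points are immediate, but the constant $0.26$ is only justified in the limit, so the argument needs $n$ large.
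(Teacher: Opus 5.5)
Your proposal is correct and follows the paper's proof. Both combine the connectivity from Theorem~\ref{thm8} (via Corollary~\ref{cor:conn}) with the dimension bound of Theorem~\ref{thm10}, apply Theorem~\ref{thmb}, and rest on the numerical fact $2\log_3 2-1=\log_3(4/3)\approx 0.2619>0.26$. Your remarks on applying Theorem~\ref{thmb} with $d=\dim X$ (which is only bounded above) and on the trivially contractible case $\dim X<c$ are small points of care that the paper leaves implicit.
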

\begin{proof} We use Theorem \ref{thm8} combined with Theorem \ref{thmb} and Theorem \ref{thm10}. 
Setting $$d=\lfloor \log_2\ln n +\log_2\log_2\ln n- \log_2 A\rfloor$$
we find that for large $n$, 
\begin{eqnarray*}
2c-d &\ge & \log_3\left(\frac{4}{3}\right) \cdot \log_2\ln n -\log_2\log_2\ln n +\log_2 A -2\log_3(2a)-2\\
&\ge& 0.26 \cdot \log_2\ln n.
\end{eqnarray*}
Our claim now follows from Theorem \ref{thmb}. 
\end{proof}
Theorem \ref{thm14} clearly implies:

\begin{teo}\label{thm:thm1}
The Lusternik-Schnirelmann category $\cat(X)$ of a random simplicial complex $X\subset \Delta_n$ in the medial regime satisfies
$
\cat(X)\le 1,
$
a.a.s. 
\end{teo}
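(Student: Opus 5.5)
The plan is to deduce the statement directly from Theorem \ref{thm14}. By that theorem, a.a.s. the random complex $X\subset \Delta_n$ is homotopy equivalent to a $k$-fold iterated suspension $\Sigma^k Z$ with $k=\lfloor 0.26\cdot \log_2\ln n\rfloor$. For $n$ large enough we have $k\ge 1$, so on this event $X\simeq \Sigma(\Sigma^{k-1}Z)$ is homotopy equivalent to a suspension $\Sigma Y$ of some CW-complex $Y$.

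The key steps are then as follows.

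First, recall that a suspension $\Sigma Y$ is the union of two cones $C_+Y$ and $C_-Y$. We may take open neighbourhoods $U_0$ and $U_1$ of the upper and lower closed half-cones, each deformation retracting onto the respective cone point. Each inclusion $U_i\to\Sigma Y$ is then null-homotopic, so $\cat(\Sigma Y)\le 1$ in the reduced normalisation used in the paper.

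Second, use homotopy invariance of $\cat$ to transfer this bound to $X$. This gives $\cat(X)\le 1$ on the event of Theorem \ref{thm14}.

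Third, since that event has probability tending to one, the statement holds a.a.s.

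A minor point is that the empty complex and the disconnected cases are excluded automatically. The connectivity from Corollary \ref{cor:conn} already holds a.a.s., and a suspension of a nonempty space is path-connected, so $\cat$ is well defined in the usual sense.

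I do not expect any genuine obstacle here. The only thing to check is that $k\ge 1$ for large $n$, which holds because $\log_2\ln n\to\infty$. Alternatively, one can bypass Theorem \ref{thm14} and argue directly from the dimension bound (\ref{dimupper}). The connectivity $c-1$ satisfies $c\approx 0.63\log_2\ln n$, while $\dim X\lesssim \log_2\ln n+\log_2\log_2\ln n$. Hence $\dim X/c<2$ for large $n$, which again gives $\cat(X)\le 1$.
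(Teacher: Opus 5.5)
Your proposal is correct and follows the paper's route: the paper also derives the statement directly from Theorem \ref{thm14}, since $k\ge 1$ for large $n$ and a suspension has $\cat\le 1$. Your alternative argument via (\ref{dimupper}) is also valid, and it is the same estimate the paper uses in the $k=1$ step of the proof of Theorem \ref{thmb}.
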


%
%
%
%
%
%

{\it The topological complexity} $\tc(X)$ of a topological space $X$ is defined as the minimal integer $k\ge 0$ such that $X\times X$ admits an open cover 
$X\times X=U_0\cup U_1\cup \dots \cup U_k$ with the property that each inclusion $$\mu_i: U_i\to X\times X, \quad i=0, 1, \dots, k$$ satisfies 
$${\pr}_1\circ \mu_{i}\simeq {\pr}_2\circ \mu_i, \quad i=0, 1, \dots, k$$ where
${\pr}_1, {\pr}_2: X\times X\to X$ are the projections on the first and the second factor correspondingly and the sign $\simeq$ stands for homotopy. 
We refer to \cite{Far0} where this notion was introduced;
see also \cite{Far2},  Lemma 4.21 which explains that the above definition is equivalent to the one given in \cite{Far0}. 
We emphasise, that in this paper we use {\it the reduced version} of the topological complexity which is smaller by 1 compared to the definition of \cite{Far0}. 

\begin{teo}\label{thm:tcle2}
The topological complexity  $\tc(X)$ of a random simplicial complex $X\subset \Delta_n$ in the medial regime satisfies
$
\tc(X)\le 2,
$
a.a.s.
\end{teo}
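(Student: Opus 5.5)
The plan is to derive the bound from the standard inequality $\tc(X)\le 2\cat(X)$, valid for path-connected spaces in the reduced normalisation (see \cite{Far0}, \cite{Far2}), combined with Theorem \ref{thm:thm1}. By that theorem, $\cat(X)\le 1$ a.a.s., so on the same event $\tc(X)\le 2$. Path-connectedness holds a.a.s. by Corollary \ref{cor:conn}, since the connectivity level $c-1$ tends to infinity with $n$.

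Recall why $\tc(X)\le 2\cat(X)$ holds, which is really the only content. One has $\tc(X)\le \cat(X\times X)$: if $U\subset X\times X$ is an open set whose inclusion is null-homotopic, then both projections restricted to $U$ are null-homotopic. Since $X$ is path-connected, they are homotopic to each other. Next, the product inequality $\cat(X\times X)\le 2\cat(X)$ holds for CW-complexes (reduced version; see \cite{CLOT}). Together these give $\tc(X)\le 2\cat(X)\le 2$.

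Alternatively, to stay self-contained, one can use Theorem \ref{thm14} with $k\ge 1$: $X\simeq \Sigma Y$, and $X$ is simply connected for large $n$. For a suspension, $X\times X$ is covered by three sets using the two cones $C_\pm$ of $\Sigma Y$, namely $(C_+\times C_+)\cup(C_-\times C_-)$, and similarly for the mixed pieces. This requires a little care about disjointness, so I would rather quote the general inequality.

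The main obstacle is the bookkeeping of the a.a.s. statements. The event on which $\cat(X)\le 1$ and $X$ is connected has probability tending to one, being the intersection of finitely many events of probability tending to one. On this event the deterministic inequality applies, so no probabilistic estimate beyond Theorem \ref{thm:thm1} is needed.
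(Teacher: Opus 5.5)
Your proposal is correct and follows the paper's proof, which simply combines the general inequality $\tc(X)\le 2\cat(X)$ from \cite{Far0} with Theorem \ref{thm:thm1}. Your extra points, namely path-connectedness via Corollary \ref{cor:conn}, the sketch of why $\tc(X)\le 2\cat(X)$ holds, and intersecting the a.a.s.\ events, are valid bookkeeping that the paper leaves implicit.
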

\begin{proof}
This follows from the general inequality 
$
\tc(X)\le 2\cdot \cat(X),
$
see \cite{Far0}, Theorem 5, and from Theorem \ref{thm:thm1}. 
\end{proof}

\marginpar{revise}
We shall finish this section with remarks related to the opposite inequality $\tc(X)\ge 2$.

The main result of \cite{GLO} states that a space of topological complexity one has homotopy type of an odd-dimensional sphere. 
In this paper we shall use the following Lemma which is weaker but has a simple independent proof.  

\begin{lema}\label{lm:tc2}
For a finite CW-complex $X$ either of the following conditions (a), (b), (c) implies $\tc(X)\ge 2$:
\begin{enumerate}
\item[{(a)}] An even-dimensional Betti number satisfies $b_i(X)\ge 1$, where $i\ge 2$ is even;
\item[{(b)}] An odd-dimensional Betti number satisfies $b_i(X)\ge 2$, where $i\ge 1$ is odd;
\item[{(c)}]  Two distinct odd-dimensional Betti numbers satisfy $b_i(X)\ge 1$ and $b_j(X)\ge 1$, where $i, j\ge 1$ are odd, $i\not=j$. 
\end{enumerate}
\end{lema}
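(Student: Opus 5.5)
We use the zero-divisors cup-length lower bound: $\tc(X)$ is at least the number of factors in any nonzero product of zero-divisors in $H^*(X\times X;\mathbb{Q})\cong H^*(X;\mathbb{Q})\otimes H^*(X;\mathbb{Q})$ (\cite{Far0}). Recall that a zero-divisor is an element of the kernel of the cup product $H^*(X)\otimes H^*(X)\to H^*(X)$. For each class $u\in H^*(X;\mathbb{Q})$ put $\bar u=u\otimes 1-1\otimes u$, which is a zero-divisor. In each of the cases (a), (b), (c) it therefore suffices to find two zero-divisors whose product is nonzero. Rational coefficients are appropriate because $X$ is a finite CW-complex, so $b_i(X)=\dim_{\mathbb{Q}} H^i(X;\mathbb{Q})$ and the Künneth isomorphism holds.

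Case (a). Take $0\ne u\in H^i(X;\mathbb{Q})$ with $i\ge 2$ even. With the graded sign rule $(a\otimes b)(c\otimes d)=(-1)^{|b||c|}ac\otimes bd$ and $|u|$ even, we get
$$\bar u^2=u^2\otimes 1-2\,u\otimes u+1\otimes u^2 .$$
The summand $u\otimes u$ lies in $H^i\otimes H^i$. The other two summands lie in $H^{2i}\otimes H^0$ and $H^0\otimes H^{2i}$, which are different bigraded pieces because $i\ge 1$. Since $u\otimes u\ne 0$, it follows that $\bar u^2\ne 0$.

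Cases (b) and (c). Take linearly independent classes $u\in H^i$ and $v\in H^j$ with $i,j$ odd. In case (b) they come from $b_i\ge 2$; in case (c) we have $i\ne j$, so independence is automatic. Then
$$\bar u\,\bar v=uv\otimes 1-(-1)^{ij}\,u\otimes v-(-1)^{0}\,v\otimes u+1\otimes uv ,$$
up to signs that we will not need. Consider the component in bidegree $(i,j)$ together with the one in $(j,i)$. If $i\ne j$, the $(i,j)$ component is $\pm u\otimes v\ne 0$, and it is not cancelled by the terms $uv\otimes 1$ and $1\otimes uv$, which lie in bidegrees $(i+j,0)$ and $(0,i+j)$. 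If $i=j$, the $(i,i)$ component is $\pm u\otimes v\pm v\otimes u$. This is nonzero because $u\otimes v$ and $v\otimes u$ are linearly independent in $H^i\otimes H^i$ when $u$ and $v$ are independent. In both situations $\bar u\bar v\ne 0$, so $\tc(X)\ge 2$.

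The only delicate step is the sign bookkeeping in the odd case, and in particular checking that when $i=j$ the two middle terms cannot cancel. This holds because $u\otimes v+\epsilon\, v\otimes u=0$ with $\epsilon=\pm 1$ would force $u$ and $v$ to be proportional.

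This also explains why a single odd class is not enough. For $u$ odd we have $\bar u^2=u^2\otimes1+1\otimes u^2$ up to sign, since the cross terms $-u\otimes u$ and $+u\otimes u$ cancel. This can vanish, as it does for odd spheres, which is consistent with \cite{GLO}.
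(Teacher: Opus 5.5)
Your proof is correct and follows the same route as the paper. Both use the zero-divisor cup-length bound applied to $\bar u^2$ in case (a) and to $\bar u\,\bar v$ in cases (b) and (c). You detect non-vanishing through the relevant bidegree component of the K\"unneth decomposition, where the paper pairs with $z\times z$ or $z_u\times z_v$; the two checks are equivalent, and you also spell out case (c), which the paper only calls ``similar''. One harmless slip, since you never use the signs: the factor $(-1)^{ij}=-1$ belongs on the $v\otimes u$ term, so $\bar u\,\bar v=uv\otimes 1-u\otimes v+v\otimes u+1\otimes uv$, which matches the paper's formula.
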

\begin{proof}
We shall use the cohomological lower bound for the topological complexity stated as Corollary 4.40 in \cite{Far2}. We need to remember that in \cite{Far2}  the author used unreduced version of the topological complexity. 

Assuming (a), there is a nonzero even-dimensional rational cohomology class $u\in H^i(X;\Bbb Q)$, where $i\ge 2$. Then the class 
$\overline u= u\times 1-1\times u\in H^i(X\times X;\Bbb Q)$ is a zero-divisor and its square equals  
$\overline u^2=u^2\times 1+1\times u^2 -2u\times u.$ 
We refer to standard material on cross-products to \cite{S}, chapter 5, \S 6. 
To show that $\overline u^2\not=0$, consider a homology class $z\in H_i(X;\Bbb Q)$ such that 
$\langle u, z\rangle \not=0$. Then one has $\langle \overline u^2, z\times z\rangle = -2 \langle u, z\rangle^2\not=0$ implying $\overline u^2\not=0$. 
Corollary 4.40 from \cite{Far2} now implies $\tc(X)\ge 2$. 

Assuming (b), there exist two linear independent cohomology classes $u, v\in H^i(X;\Bbb Q)$. The product of the corresponding zero-divisors equals
$$
\overline u\cdot \overline v = (uv)\times 1+1\times (uv) -u\times v+v\times u \in H^{2i}(X\times X;\Bbb Q).
$$
Let $z_u, z_v\in H_i(X;\Bbb Q)$ be homology classes dual to $u, v$, i.e. $\langle u, z_u\rangle =1=\langle v, z_v\rangle$ and $\langle u, z_v\rangle=0=\langle v, z_u\rangle$. Then 
$\langle \overline u\cdot \overline v, z_u\times z_v\rangle =- \langle u\times v, z_u\times z_v\rangle = - 1$ and hence $\overline u\cdot \overline v\not=0$. 
The result follows by applying Corollary 4.40 from \cite{Far2}. 

The proof under assumption (c) is similar. 
\end{proof}

\begin{lema}\label{lm:tc22}
Suppose that the face numbers $f_j(X)$ of  a finite simplicial complex $X$ satisfy 
\begin{eqnarray}\label{eq:15c}
f_{\ell}(X)-f_{\ell-1}(X)-f_{\ell+1}(X)\ge 2,\quad \mbox{for some}\quad \ell.
\end{eqnarray}   
Then $\tc(X)\ge 2$. 
\end{lema}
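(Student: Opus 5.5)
We reduce Lemma \ref{lm:tc22} to Lemma \ref{lm:tc2} by producing a large Betti number from the face numbers. The key tool is the simplicial chain complex with rational coefficients
$$
\cdots \to C_{\ell+1}(X;\Bbb Q)\xrightarrow{\partial_{\ell+1}} C_\ell(X;\Bbb Q)\xrightarrow{\partial_\ell} C_{\ell-1}(X;\Bbb Q)\to\cdots,
$$
where $\dim C_j = f_j(X)$.

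We first bound $b_\ell(X)=\dim\ker\partial_\ell-\operatorname{rank}\partial_{\ell+1}$ from below. Since $\operatorname{rank}\partial_\ell\le f_{\ell-1}(X)$, we have $\dim\ker\partial_\ell = f_\ell - \operatorname{rank}\partial_\ell\ge f_\ell-f_{\ell-1}$. Since $\operatorname{rank}\partial_{\ell+1}\le f_{\ell+1}$, combining gives
$$
b_\ell(X)\ge f_\ell(X)-f_{\ell-1}(X)-f_{\ell+1}(X)\ge 2 .
$$
If we use reduced homology, with augmentation in degree $-1$ and $f_{-1}=1$, the same estimate holds for $\tilde b_\ell$. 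This matters only in the edge case $\ell=0$.

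Next we dispose of low values of $\ell$. If $\ell=0$, the inequality says $f_0-f_1\ge 2$, so $b_0(X)\ge 2$ and $X$ is disconnected. Then $\tc(X)=\infty$ under the convention for non-path-connected spaces, or the statement is vacuous. I would add a remark that topological complexity is considered only for path-connected $X$, so this case either cannot occur or trivially gives $\tc(X)\ge 2$.

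For $\ell\ge 1$ we split by parity. If $\ell$ is odd, then $b_\ell(X)\ge 2$ is exactly condition (b) of Lemma \ref{lm:tc2}. If $\ell\ge 2$ is even, then $b_\ell(X)\ge 1$ is condition (a). In either case Lemma \ref{lm:tc2} gives $\tc(X)\ge 2$.

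The only delicate point is the degenerate case $\ell=0$ and the convention about disconnected spaces. The rank estimate itself is routine linear algebra, so I expect no real obstacle beyond stating the convention cleanly.
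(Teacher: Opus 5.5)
Your proof is correct and follows the paper's route. The paper writes $f_j=b_j+\beta_j+\beta_{j-1}$ with $\beta_j=\operatorname{rank}\partial_{j+1}$ and substitutes to get $b_\ell\ge 2$; this is the same rank count as your bounds $\operatorname{rank}\partial_\ell\le f_{\ell-1}$ and $\operatorname{rank}\partial_{\ell+1}\le f_{\ell+1}$, after which both arguments invoke Lemma~\ref{lm:tc2}. Your separate treatment of $\ell=0$ fills a case the paper passes over silently: Lemma~\ref{lm:tc2} does not apply there, and one uses instead that a disconnected $X$ admits no finite cover of the required kind, so $\tc(X)=\infty$.
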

\begin{proof} Consider the simplicial chain complex of $X$ over $\mathbb Q$
$$
\dots\to C_{j+1}\stackrel d\to C_j\stackrel d\to C_{j-1}\to \dots, \quad \dim C_j=f_j(X).
$$
Denoting by $\beta_j$ the dimension of the image of $d:C_{j+1}\to C_j$ we have 
$$
f_j(X) = \dim C_j=b_j(X)+\beta_j +\beta_{j-1}.
$$
Substituting this into  (\ref{eq:15c}) gives
$$
b_\ell(X) -b_{\ell-1}(X)-b_{\ell+1}(X)-\beta_{\ell-2}-\beta_{\ell+1}\ge 2,
$$
which implies  $b_\ell(X)\ge 2.$ Now our statement follows from Lemma \ref{lm:tc2}. 
\end{proof}

%
%

\section{HMR random complexes and their critical dimensions}\label{sec:hmr}
\subsection{} In this and in the following sections we consider a special class of random simplicial complexes (as described in \S\ref{sec:31}), where each probability parameter $p_\sigma$ equals a fixed number 
$p\in (0,1)$, i.e. 
we assume that 
\begin{eqnarray*}
p_\sigma=p\quad \mbox{ for all}\quad  \sigma\in \Delta_n.
\end{eqnarray*} 
We refer to this model as {\it homogeneous medial regime random simplicial complex}, or HMR random complex, for short.
Our main result states: 

\begin{teo}\label{thm:tc2}
For a subset $\N'\subset \N$ of density 1, the probability that an HMR random simplicial complex $X\subset \Delta_n$ satisfies $\tc(X)=2$ tends to one as $n\to \infty$, $n\in \N'$. 
\end{teo}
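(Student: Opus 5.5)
The upper bound $\tc(X)\le 2$ a.a.s.\ is Theorem \ref{thm:tcle2}, so the work is in proving $\tc(X)\ge 2$ a.a.s.\ along a density-one set $\N'$. The plan is to apply Lemma \ref{lm:tc22}: it suffices to find a dimension $\ell=\ell(n)$ with $f_\ell(X)-f_{\ell-1}(X)-f_{\ell+1}(X)\ge 2$ a.a.s. (Alternatively, Lemma \ref{lm:tc2}(a),(b) applied directly to Betti numbers, but face numbers are far easier to control.)

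\emph{Computing expectations.} In the HMR model, by (\ref{eq:x}) a $j$-simplex $\sigma$ lies in $X$ with probability $p^{2^{j+1}-1}$, the number of nonempty faces of $\sigma$. Hence
\[
\E f_j = \binom{n}{j+1}\, p^{2^{j+1}-1}.
\]
The ratio $\E f_{j+1}/\E f_j=\frac{n-j-1}{j+2}\,p^{2^{j+1}}$ is strictly decreasing in $j$, so the sequence $\E f_j$ is unimodal. Let $\ell$ be its mode, the \emph{critical dimension}. It lies near $\log_2\ln n$, consistent with Theorem \ref{thm10}.

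Near $\ell$, consecutive ratios change by the enormous factor roughly $p^{2^{j+1}}\approx n^{-1}$. So typically
\[
\E f_{\ell+1}/\E f_\ell\to 0 \quad\text{and}\quad \E f_{\ell-1}/\E f_\ell\to 0,
\]
unless one of these ratios is of order $1$. That happens exactly when $x_n:=\log_2\log_{1/p} n$ (roughly) lies within $o(1)$ of a value producing a ratio of order $1$, i.e.\ when a fractional-part-type quantity is close to a boundary. Let $\N'$ be the set of $n$ for which both ratios are at most $\varepsilon_n\to0$ slowly. Because $\log_2\ln n$ grows so slowly, the fractional parts are asymptotically equidistributed in the averaged sense. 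More precisely, the bad $n$ form blocks whose relative measure tends to $0$. The main technical step is to check that $\N'$ has density $1$, and this is where I expect the obstacle to be: the bad windows in $n$ are intervals $[N,N(1+\delta)]$ in exponentials of exponentials. One must verify that for each $N$ the proportion of bad $n\le N$ tends to $0$, using that the ratio varies by a factor $n^{\Theta(1)}$ across a single step of $\ell$ while varying only by a factor $1+O(1/n)$ as $n\mapsto n+1$.

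\emph{Concentration.} On $\N'$ we have $\E f_\ell\to\infty$, indeed it grows superpolynomially. By Markov, $f_{\ell\pm1}\le \sqrt{\varepsilon_n}\,\E f_\ell$ a.a.s. For $f_\ell$ we use Chebyshev. Write $\mathrm{Var} f_\ell$ as a sum over pairs of $\ell$-simplexes sharing $i+1\ge1$ vertices; the covariance terms are at most
\[
\binom{n}{\ell+1}\binom{\ell+1}{i+1}\binom{n}{\ell-i}\,p^{2\cdot(2^{\ell+1}-1)-(2^{i+1}-1)}.
\]
Relative to $(\E f_\ell)^2$, this has size about $\binom{\ell+1}{i+1}\big/\bigl(\binom{n}{i+1}p^{2^{i+1}-1}\bigr)$, which tends to $0$ uniformly for $i<\ell$ since $i$ is below the critical dimension. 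Hence $f_\ell\ge \tfrac12\E f_\ell$ a.a.s., giving $f_\ell-f_{\ell-1}-f_{\ell+1}\ge \tfrac14\E f_\ell\ge 2$. Lemma \ref{lm:tc22} then yields $\tc(X)\ge2$, and combined with Theorem \ref{thm:tcle2} this gives $\tc(X)=2$.
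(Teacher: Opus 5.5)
There is a genuine gap, and it sits where you expected the obstacle: the set $\N'$ you define (both ratios $\E f_{\ell\pm1}/\E f_\ell$ at most $\varepsilon_n\to0$) does not have density $1$. In fact its lower density is $0$.

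For $n\in J_\ell$ the ratio $\E f_{\ell+1}/\E f_\ell=\frac{n-\ell-1}{\ell+2}\,p^{2^{\ell+1}}$ is \emph{linear} in $n$. It reaches $1$ at $n-\ell-1=M_\ell:=(\ell+2)q^{2^{\ell+1}}$. There the critical dimension becomes $\ell+1$, and the relevant ratio becomes $\E f_\ell/\E f_{\ell+1}=M_\ell/(n-\ell-1)$. Hence, for every fixed $\varepsilon\in(0,1)$ and all large $\ell$, every $n$ with $n-\ell-1\in(\varepsilon M_\ell,\,M_\ell/\varepsilon)$ is excluded from your $\N'$. For $m\approx M_\ell/\varepsilon$, at most a fraction of about $\varepsilon^2$ of $[0,m]$ lies in $\N'$.

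So the bad windows have the form $[\varepsilon M, M/\varepsilon]$, which is multiplicatively wide, not $[N,N(1+\delta)]$ with $\delta\to0$. The fact that the ratio changes only by a factor $1+O(1/n)$ per step is exactly why $n$ must grow by a factor $\varepsilon^{-2}$ to carry the ratio across $[\varepsilon,\varepsilon^{-1}]$. Equidistribution of $\log_2\log n$ is beside the point: natural density only sees the multiplicative widths of these windows in $n$. In particular, the claim that ``typically'' both ratios tend to $0$ is false in the sense of natural density.

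What makes the argument work, and what the paper does, is to require only $\E f_{\ell-1}+\E f_{\ell+1}\le(1-c)\,\E f_\ell$ with $c=c(\ell)\to0$. The bad windows then shrink to roughly $\bigl((1-c)M_\ell,\,M_\ell/(1-c)\bigr)$, whose multiplicative width tends to $1$, so their union has density $0$; these are the intervals $I_{\ell,c,\kappa}$. The price is that $\E f_{\ell\pm1}$ may now be comparable to $\E f_\ell$, so Markov's inequality no longer helps. You need two-sided concentration of all three of $f_{\ell-1},f_\ell,f_{\ell+1}$ at a relative precision $\mu$ that is small compared with $c$.

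Your covariance computation supplies this. The relative contribution of pairs of $j$-simplexes sharing $i+1$ vertices is about $\binom{j+1}{i+1}^2/\E f_i$; note the binomial should be squared. The same computation works for $j=\ell\pm1$ and gives relative variance $n^{-1+o(1)}$ for $j\le\ell+1$ (compare Lemma \ref{lm:26}). Then take $\mu=n^{-1/4}$ and $c(\ell)=6\max_{n\in J_\ell}n^{-1/4}$. Chebyshev gives, a.a.s., $f_\ell-f_{\ell-1}-f_{\ell+1}\ge(c-2\mu)\,\E f_\ell\ge4\mu\,\E f_\ell\to\infty$, and Lemma \ref{lm:tc22} finishes as you intended.
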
 

Recall that a subset $\N'\subset \N$ has density 1 if $$n^{-1}\cdot \sharp(\N'\cap [0,n])\to 1, \quad \mbox{ as}\quad  n\to \infty.$$ 
Here the symbol $\sharp(\N'\cap [0,n])$ stands for the cardinality of the set $\N'\cap [0,n]$. 

\subsection{} Note that under assumptions of Theorem \ref{thm:tc2} one has $\cat(X)=1$ with probability tending to one as $n\to \infty$, $n\in \N'$. Indeed, due to Theorem 
\ref{thm:thm1} we know that $\cat(X)\le 1$ and the possibility $\cat(X)=0$ would imply the contractibility of  $X$ and hence $\TC(X)=0$, 
 contradicting Theorem \ref{thm:tc2}. 
 
 \subsection{} Note that spaces of topological complexity $\tc(X)$ equal $0$ or $1$ are quite simple: $\tc(X)=0$ characterises contractible spaces and $\tc(X)=1$ happens iff $X$ has homotopy type of a sphere of odd dimension, see \cite{GLO}. 
 Homotopy types of spaces with $\tc(X)=2$ form a large variety, as Theorem \ref{thm:tc2} may indicate. For example, any $r$-connected 
 finite CW-complex $X$ of dimension $\dim X\le 2r+1$ having total Betti number greater than 1 satisfies $\tc(X)=2$. The upper bound $\tc(X)\le 2$ follows from inequality (\ref{dimupper}) and the lower bound follows from Lemma \ref{lm:tc2}. 

The proof of Theorem \ref{thm:tc2} will occupy the rest of the paper, it will be completed in \S \ref{sec:9}.


\subsection{} We shall 
analyse the face numbers $f_\ell(X)$, i.e. the numbers of $\ell$-dimensional simplexes in $X$. The face numbers $f_\ell$ are random variables on the probability space of random simplicial complexes, and our first goal will be to analyse their mathematical expectations $\E(f_\ell)$.

Probability that a simplex $\sigma\in \Delta_n$ of dimension $\ell$ is included in a HMR random simplicial complex $X\subset \Delta_n$ equals 
$
p^{2^{\ell+1}-1},
$
compare (\ref{eq:x}). Thus, the expectation of $f_\ell$ is given by 
\begin{eqnarray}\label{eq:expect}
\mathbb E(f_\ell) =\binom n {\ell+1} \cdot p^{2^{\ell+1}-1}, \quad \mbox{where}\quad \ell=0, 1, \dots, n-1.
\end{eqnarray}
We shall denote $q=p^{-1}.$ Note that $\E(f_\ell)$ is a function of two parameters, $\ell$ and $n$. 

For an integer $\ell\ge 0$, let $J_\ell$ denote the set $J_\ell\subset \N$,
\begin{eqnarray}\label{eq:16}
J_\ell =\{n\in \N; (\ell+1) q^{2^{\ell}}+\ell \, < n\,  <  (\ell+2) q^{2^{\ell+1}}+\ell+1\}.
\end{eqnarray}
The sets $J_\ell$ are pairwise disjoint, i.e. $J_\ell\cap J_{\ell'}=\emptyset$ for $\ell\not=\ell'$. 


\begin{lema}  \label{lm:14a}
For a fixed integer $n\in J_\ell$, the sequence $\E(f_0), \E(f_1), \dots, \E(f_{n-1})$ is unimodal. More precisely, if $n\in J_\ell$ then one has 
$$
\E(f_0)<\E(f_1)< \dots<\E(f_{\ell-1})<\E(f_{\ell})>\E(f_{\ell+1})>\dots >\E(f_{n-1}),
$$
i.e. the sequence $\E(f_i)$ attains its maximum for $i=\ell$ and it is monotone increasing for $i\le \ell$ and monotone decreasing for $i\ge \ell$. 
\end{lema}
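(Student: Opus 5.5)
We compare consecutive terms through their ratio. From (\ref{eq:expect}),
$$
R_i:=\frac{\E(f_{i+1})}{\E(f_i)}=\frac{\binom{n}{i+2}}{\binom{n}{i+1}}\cdot p^{2^{i+2}-2^{i+1}}=\frac{n-i-1}{i+2}\cdot p^{2^{i+1}}=\frac{n-i-1}{(i+2)\,q^{2^{i+1}}},
$$
for $0\le i\le n-2$. Thus $\E(f_{i+1})>\E(f_i)$ if and only if $n>(i+2)q^{2^{i+1}}+i+1$, and $\E(f_{i+1})<\E(f_i)$ if and only if $n<(i+2)q^{2^{i+1}}+i+1$. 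Equality $R_i=1$ is excluded for the relevant indices by the strict inequalities defining $J_\ell$, as the next step shows.

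Introduce the threshold function $g(i)=(i+2)q^{2^{i+1}}+i+1$, so that $R_i>1\iff n>g(i)$. Since $q>1$, both $(i+2)q^{2^{i+1}}$ and $i+1$ are strictly increasing in $i$, so $g$ is strictly increasing. The definition (\ref{eq:16}) says exactly that $n\in J_\ell$ means $g(\ell-1)<n<g(\ell)$, where $g(\ell-1)=(\ell+1)q^{2^{\ell}}+\ell$. Note that this also shows the $J_\ell$ are pairwise disjoint, since they are the open intervals between consecutive values of the increasing sequence $g$.

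Assume $n\in J_\ell$. For $i\le \ell-1$ we have $g(i)\le g(\ell-1)<n$, hence $R_i>1$, which gives the increasing chain $\E(f_0)<\dots<\E(f_\ell)$. For $\ell\le i\le n-2$ we have $g(i)\ge g(\ell)>n$, hence $R_i<1$, which gives the decreasing chain $\E(f_\ell)>\dots>\E(f_{n-1})$. When $\ell=0$ the first chain is empty.

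The only points requiring care are boundary issues. First, we need $\ell\le n-1$, so that the index $\ell$ lies in the range of the sequence. This follows from $n>(\ell+1)q^{2^\ell}+\ell\ge 2\ell+1$. Second, the ratio formula needs $i+1\le n-1$, which holds throughout the range considered. Apart from these checks the argument is elementary, and the monotonicity of $g$ is the key point.
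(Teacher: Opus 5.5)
Your proof is correct and takes the paper's approach: compute $\E(f_{i+1})/\E(f_i)=\frac{n-i-1}{i+2}\,p^{2^{i+1}}$, then read off from (\ref{eq:16}) that the ratio exceeds $1$ exactly when $i\le \ell-1$. The only difference is phrasing: you use that the threshold $g(i)$ is increasing where the paper uses that the ratio is decreasing, which is equivalent, and you also spell out the disjointness of the $J_\ell$ and the check $\ell\le n-1$, which the paper leaves implicit.
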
 
\begin{proof} The ratio
\begin{eqnarray}\label{eq:ratio1a}
 \frac{\mathbb E(f_{i+1})}{\mathbb E(f_{i})}= \frac{n-i-1}{i+2}\cdot p^{2^{i+1}}
\end{eqnarray}
is obviously a decreasing function of $i$. From (\ref{eq:ratio1a}) we see that $\E(f_{i+1})>\E(f_i)$ iff  
$$n>(i+2)q^{2^{i+1}}+i+1,$$ i.e. iff $i\le \ell-1$, as seen from 
(\ref{eq:16}). Similarly, one has $\E(f_{i+1})<\E(f_i)$ iff $$n<(i+2)q^{2^{i+1}}+i+1,$$ i.e. iff $i\ge \ell$. 
\end{proof}
We see that, as long as $n\in J_\ell$, the expectation of the face number $f_{\ell}$ dominates the expectations of all other face numbers. 
Since the intervals $J_\ell$ are pairwise disjoint, any fixed $n\in \N$ lies in at most one interval $J_\ell$. 

\begin{definition}
If $n\in J_\ell$ we shall say that $\ell$ is {\it the critical dimension} of an HMR random simplicial complex $X\subset \Delta_n$. 
\end{definition}

\begin{lema}\label{lm:22}
For $n\in J_\ell$ with $\ell$ large enough the critical dimension $\ell$ can be represented in the form 
\begin{eqnarray}\label{eq:19b}
\ell=\log_2\log_q n  -\epsilon_n, \quad \mbox{where}\quad q=p^{-1}.\end{eqnarray} 
The function $n\mapsto \epsilon_n$, where $n\in J_\ell$, is monotone increasing and one has 
$\epsilon_n\in (\alpha_\ell^-,\alpha_\ell^+)$, where 
\begin{eqnarray*}
\alpha_\ell^-=\log_2\left[1+\frac{\log_q\ell}{2^\ell}\right]\quad \mbox{and}\quad 
\alpha_\ell^+ = 1+ \frac{\log_q(2\ell)}{\ln 2 \cdot 2^{\ell+1}}.
\end{eqnarray*}
In particular, $\alpha^-_\ell>0$ and $\alpha^-_\ell\to 0$; besides, 
$\alpha^+_\ell>1$ and $\alpha^+_\ell\to 1$ when $\ell\to \infty$. 
\end{lema}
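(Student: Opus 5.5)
We define $\epsilon_n$ by the formula (\ref{eq:19b}), i.e. $\epsilon_n=\log_2\log_q n-\ell$. Since $\log_2\log_q n$ is strictly increasing in $n$ and $\ell$ is fixed on $J_\ell$, the function $n\mapsto\epsilon_n$ is monotone increasing on $J_\ell$. So the only content is the two-sided bound $\alpha_\ell^-<\epsilon_n<\alpha_\ell^+$. We will get it by taking $\log_2\log_q$ of the endpoints of $J_\ell$ from (\ref{eq:16}).

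\emph{Lower bound.} For $n\in J_\ell$ we have $n<(\ell+2)q^{2^{\ell+1}}+\ell+1$. For $\ell$ large this is at most $2\ell\, q^{2^{\ell+1}}$, since $(\ell+2)q^{2^{\ell+1}}+\ell+1\le 2\ell q^{2^{\ell+1}}$ once $\ell\ge 3$, say. Hence
$$\log_q n< 2^{\ell+1}+\log_q(2\ell)=2^{\ell+1}\Bigl(1+\frac{\log_q(2\ell)}{2^{\ell+1}}\Bigr).$$
Applying $\log_2$ and $\log_2(1+x)\le x/\ln 2$ gives
$$\log_2\log_q n<\ell+1+\frac{\log_q(2\ell)}{\ln2\cdot 2^{\ell+1}}=\ell+\alpha^+_\ell.$$
This is $\epsilon_n<\alpha_\ell^+$.

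\emph{Upper bound.} For $n\in J_\ell$ we have $n>(\ell+1)q^{2^\ell}+\ell>\ell\, q^{2^\ell}$. Hence
$$\log_q n>2^\ell+\log_q\ell=2^\ell\Bigl(1+\frac{\log_q\ell}{2^\ell}\Bigr),$$
and so $\log_2\log_q n>\ell+\alpha^-_\ell$, i.e. $\epsilon_n>\alpha^-_\ell$. Here $\log_q\ell>0$ because $q>1$ and $\ell\ge2$, so $\alpha^-_\ell>0$, and $\log_q\ell/2^\ell\to0$ gives $\alpha_\ell^-\to0$. Similarly $\alpha^+_\ell>1$ with the correction term tending to $0$, so $\alpha^+_\ell\to 1$.

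\emph{Sign bookkeeping.} Since $\epsilon_n=\log_2\log_q n-\ell$, a lower bound on $\log_2\log_q n$ is a lower bound on $\epsilon_n$. So the $n$-lower endpoint of $J_\ell$ yields $\alpha^-_\ell$ and the $n$-upper endpoint yields $\alpha^+_\ell$; the two labels above refer to the endpoints of $J_\ell$ being used, not to the side of the bound on $\epsilon_n$.

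The only real obstacle is this bookkeeping, together with choosing how large $\ell$ must be for the crude absorption $(\ell+2)q^{2^{\ell+1}}+\ell+1\le 2\ell q^{2^{\ell+1}}$ to hold. Since $q>1$, $\ell\ge 3$ suffices: $(\ell+2)+\ell+1=2\ell+3$ is not at most $2\ell$, but $(\ell+2)q^{2^{\ell+1}}+\ell+1\le(\ell+2)q^{2^{\ell+1}}+(\ell+1)q^{2^{\ell+1}}$ is too weak. Instead we use $\ell+1\le (\ell-2)q^{2^{\ell+1}}$, which holds for $\ell$ large because $q^{2^{\ell+1}}\to\infty$. This is why the lemma is stated for $\ell$ large enough.
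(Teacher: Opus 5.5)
Your proof is correct and is essentially the paper's argument. You define $\epsilon_n=\log_2\log_q n-\ell$, use $n>\ell q^{2^\ell}$ to get $\epsilon_n>\alpha_\ell^-$, and use $n<2\ell\, q^{2^{\ell+1}}$ for large $\ell$ together with $\log_2(1+x)\le x/\ln 2$ to get $\epsilon_n<\alpha_\ell^+$. The one blemish is your stray claim that $\ell\ge 3$ suffices for the absorption: it actually depends on $q$, and the correct justification is the one you end with, namely $\ell+1\le(\ell-2)q^{2^{\ell+1}}$ for $\ell$ large since $q^{2^{\ell+1}}\to\infty$.
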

\begin{proof} Writing $\epsilon_n=\log_2\log_qn-\ell$ we obviously see that $\epsilon_n$ is monotone increasing. For $n\in J_\ell$ one has $n>\ell q^{2^{\ell}}$
and hence $\log_qn >2^\ell+ \log_q\ell= 2^\ell\cdot \left[1+\frac{\log_q\ell}{2^\ell}\right]$. Applying $\log_2$ we get 
$\log_2\log_qn>\ell +\log_2\left[1+\frac{\log_q\ell}{2^\ell}\right]$. This proves that $\epsilon_n>\alpha_\ell^-$ for $n\in J_\ell$. 

Similarly, for $n\in J_\ell$ with $\ell$ large,
we have $n<(2\ell)\cdot q^{2^{\ell+1}}$. Applying $\log_q$ we get 
$$
\log_q n<2^{\ell+1} +\log_q(2\ell)= 2^{\ell+1}\cdot \left[1+ \frac{\log_q(2\ell)}{2^{\ell+1}}\right]
$$
and hence
$$
\log_2\log_q n < \ell+1 + \log_2\left[1+ \frac{\log_q(2\ell)}{2^{\ell+1}}\right] < \ell+1 +\frac{\log_q(2\ell)}{\ln 2\cdot 2^{\ell+1}}.
$$
This proves that $\epsilon_n<\alpha_\ell^+$ for $n\in J_\ell$ with $\ell\ge 2$. 
\end{proof}
%
%
%
%


Finally we shall also mention an estimate for $\E(f_\ell)$ for $n\in J_\ell$. 
\begin{lema}
For $n\in J_\ell$ one has 
$$\E(f_\ell)\ge  \frac{q \cdot n^{\ell-1}}{(\ell+1)^{\ell-1}}.$$
\end{lema}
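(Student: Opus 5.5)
We need to show $\E(f_\ell)=\binom{n}{\ell+1}p^{2^{\ell+1}-1}\ge q n^{\ell-1}/(\ell+1)^{\ell-1}$ for $n\in J_\ell$. The plan is to bound the binomial coefficient below, and then use the lower endpoint of $J_\ell$ to control the power of $p$.

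First, for the binomial coefficient we use
$$\binom{n}{\ell+1}=\frac{n(n-1)\cdots(n-\ell)}{(\ell+1)!}\ge \frac{(n-\ell)^{\ell+1}}{(\ell+1)!}.$$
Since $n\in J_\ell$, we have $n-\ell>(\ell+1)q^{2^\ell}$. A cruder but cleaner route is to split the product, keeping $\ell-1$ factors comparable to $n$ and using the remaining two factors to absorb $p^{2^{\ell+1}}$. Indeed,
$$p^{2^{\ell+1}-1}=q\cdot p^{2^{\ell+1}}=q\,(q^{2^\ell})^{-2},$$
and $q^{2^\ell}<(n-\ell)/(\ell+1)$. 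Hence
$$p^{2^{\ell+1}-1}> q\,\frac{(\ell+1)^2}{(n-\ell)^2}.$$

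Combining the two bounds gives
$$\E(f_\ell)> q\,(\ell+1)^2\,\frac{n(n-1)\cdots(n-\ell)}{(\ell+1)!\,(n-\ell)^2}\ge q\,(\ell+1)^2\,\frac{(n-\ell)^{\ell-1}}{(\ell+1)!},$$
where we bounded each of the $\ell+1$ factors below by $n-\ell$.

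It then remains to compare $(\ell+1)^2(n-\ell)^{\ell-1}/(\ell+1)!$ with $n^{\ell-1}/(\ell+1)^{\ell-1}$. This is the main obstacle: the naive bound $(\ell+1)!\le(\ell+1)^{\ell+1}$ yields exactly $(n-\ell)^{\ell-1}/(\ell+1)^{\ell-1}$, but we need $n^{\ell-1}$ rather than $(n-\ell)^{\ell-1}$.

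To recover this, we should keep the factors $n,n-1,\dots,n-\ell+2$ (there are $\ell-1$ of them) in the numerator and use the two smallest factors, $(n-\ell+1)(n-\ell)\ge(n-\ell)^2$, to cancel the denominator $(n-\ell)^2$. This leaves
$$q\,(\ell+1)^2\,\frac{n(n-1)\cdots(n-\ell+2)}{(\ell+1)!}.$$
We then pair $n-j\ge n\cdot\frac{\ell+1-j}{\ell+1}$, which is valid since $n\ge \ell+1$. Writing $\prod_{j=0}^{\ell-2}(n-j)\ge n^{\ell-1}\frac{(\ell+1)!/2}{(\ell+1)^{\ell-1}}$, the expression is at least $q\,\frac{(\ell+1)^2}{2}\cdot\frac{n^{\ell-1}}{(\ell+1)^{\ell-1}}$, which is larger than the claimed bound since $(\ell+1)^2\ge 2$.

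Finally, we double-check the edge cases $\ell=0,1$ separately; there the exponent $\ell-1\le 0$, and the inequality follows directly from $n>(\ell+1)q^{2^\ell}+\ell$.
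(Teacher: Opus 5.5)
Your proof is correct and is essentially the paper's: the left endpoint of $J_\ell$ gives $p^{2^{\ell+1}-1}=q\,p^{2^{\ell+1}}>q(\ell+1)^2/(n-\ell)^2$, and the rest of your argument is the termwise estimate that underlies $\binom{a}{b}\ge a^b/b^b$. The paper is more streamlined: it first weakens $(\ell+1)/(n-\ell)$ to $(\ell+1)/n$ and then applies $\binom{n}{\ell+1}\ge n^{\ell+1}/(\ell+1)^{\ell+1}$ directly, which removes both your ``obstacle'' and the separate $\ell=0$ check.
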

\begin{proof}
From (\ref{eq:16}) we see that for $n\in J_\ell$ one has $q^{2^{\ell}}<\frac{n-\ell}{\ell+1}$ and hence $p^{2^{\ell}}>\frac{\ell+1}{n-\ell}>\frac{\ell+1}{n}$. Hence using the well-known inequality $\binom a b \ge \frac{a^b}{b^b}$ we obtain
\begin{eqnarray*}\label{ineq:29}
\E(f_\ell) =\binom n {\ell+1} \cdot p^{2^{\ell+1}-1} > q \cdot \binom n {\ell+1}\left(\frac{\ell+1}{n}\right)^2>   \frac{q \cdot n^{\ell-1}}{(\ell+1)^{\ell-1}}. 
\end{eqnarray*}
\end{proof}



\section{The intervals of strong domination}

In this section we introduce  a smaller interval 
$I_{\ell, c, \kappa}\subset J_\ell$, where {\it \lq\lq strong domination\rq\rq}\   takes place.
\subsection{} For real numbers 
$$0<c<1/2\quad\mbox{ and}\quad  1/2 <\kappa<1,$$ let $I_{\ell,c, \kappa}\subset \Bbb N$ denote 
the set of integers $n\in \N$ satisfying the inequalities
\begin{eqnarray}\label{in:26a}
 \frac{2\kappa(\ell+1)}{1-c} \cdot q^{2^{\ell}}\, \le \, n-\ell \, \le
 (1-c)(\ell+1) \cdot q^{2^{\ell+1}}\cdot\left[1-\frac{2\kappa p^{2^{\ell}}}{(1-c)^2}\right].
 \end{eqnarray}
 Obviously, $I_{\ell, c, \kappa}\subset J_\ell$. For the cardinality $|I_{\ell, c, \kappa}|$ one has
 \begin{eqnarray}\label{eq:ilc}
|I_{\ell, c, \kappa}| \ge  (1-c)(\ell+1)q^{2^{\ell+1}}\cdot \left[1-\frac{4\kappa p^{2{^\ell}}}{(1-c)^2}\right]-1 
 \end{eqnarray} 
\begin{lema} For any $n\in I_{\ell, c, \kappa}$ 
one has
\begin{eqnarray}\label{ineq:19a}
\Bbb E(f_{\ell})-\Bbb E(f_{\ell-1}) - \Bbb E(f_{\ell+1})\ge c \cdot\Bbb E(f_{\ell}),
\end{eqnarray}
assuming that \begin{eqnarray}\label{in:new1}
1+\frac{4\cdot p{^{2^\ell}}}{(1-c)^2}<2\kappa<2.
\end{eqnarray} 
\end{lema}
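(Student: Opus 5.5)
Dividing both sides of (\ref{ineq:19a}) by $\E(f_\ell)$ turns the claim into a statement about the two ratios $\E(f_{\ell-1})/\E(f_\ell)$ and $\E(f_{\ell+1})/\E(f_\ell)$. It suffices to show
$$\frac{\E(f_{\ell-1})}{\E(f_\ell)}+\frac{\E(f_{\ell+1})}{\E(f_\ell)}\le 1-c .$$
Both ratios are given explicitly by (\ref{eq:ratio1a}):
$$\frac{\E(f_{\ell+1})}{\E(f_\ell)}=\frac{n-\ell-1}{\ell+2}\,p^{2^{\ell+1}},\qquad \frac{\E(f_{\ell-1})}{\E(f_\ell)}=\frac{\ell+1}{n-\ell}\,q^{2^{\ell}}.$$
The plan is to bound each ratio using one of the two inequalities in (\ref{in:26a}).

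From the left inequality of (\ref{in:26a}), $n-\ell\ge \frac{2\kappa(\ell+1)}{1-c}q^{2^\ell}$, so we get directly
$$\frac{\E(f_{\ell-1})}{\E(f_\ell)}\le \frac{1-c}{2\kappa}.$$
From the right inequality of (\ref{in:26a}), together with $\frac{n-\ell-1}{\ell+2}<\frac{n-\ell}{\ell+1}$, we get
$$\frac{\E(f_{\ell+1})}{\E(f_\ell)}\le (1-c)\Bigl[1-\frac{2\kappa p^{2^\ell}}{(1-c)^2}\Bigr].$$

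Adding the two bounds and dividing by $1-c$, it remains to check that
$$\frac1{2\kappa}+1-\frac{2\kappa p^{2^\ell}}{(1-c)^2}\le 1 .$$
This is equivalent to $(1-c)^2\le 4\kappa^2p^{2^\ell}$. Written this way the inequality looks wrong: $p^{2^\ell}$ is tiny while $(1-c)^2$ is of order one. So I would not trust this naive combination and would recheck how the two endpoint conditions are supposed to interact.

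The main obstacle is therefore the bookkeeping, and the likely resolution is that the two bounds should be used jointly rather than separately. The quantity to control is
$$g(n)=\frac{A}{n-\ell}+B(n-\ell),\qquad A=(\ell+1)q^{2^\ell},\quad B\approx \frac{p^{2^{\ell+1}}}{\ell+1},$$
and $g$ is convex in $n-\ell$. Hence $g(n)\le 1-c$ for all $n$ in an interval as soon as it holds at both endpoints, and I would check the two endpoints of (\ref{in:26a}) separately.

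At the left endpoint, the first term equals $\frac{1-c}{2\kappa}$. The second term is about $\frac{2\kappa}{1-c}p^{2^\ell}$, so the requirement is
$$\frac{1-c}{2\kappa}+\frac{2\kappa p^{2^\ell}}{1-c}\le 1-c .$$
This reduces to $\frac1{2\kappa}+\frac{2\kappa p^{2^\ell}}{(1-c)^2}\le 1$. It follows from (\ref{in:new1}) because $2\kappa<2$ gives
$$1+\frac{4p^{2^\ell}}{(1-c)^2}<2\kappa\ \Longrightarrow\ 1+\frac{(2\kappa)^2p^{2^\ell}}{(1-c)^2}<2\kappa .$$
At the right endpoint, the second term is $(1-c)\bigl[1-\frac{2\kappa p^{2^\ell}}{(1-c)^2}\bigr]$. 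The first term is at most about $\frac{p^{2^\ell}}{(1-c)}\bigl[1-\dots\bigr]^{-1}$, and I would verify that it is absorbed by the deficit $\frac{2\kappa p^{2^\ell}}{1-c}$, again using $2\kappa>1+\frac{4p^{2^\ell}}{(1-c)^2}$. The remaining step is the careful algebra at this endpoint, including the off-by-one terms $\ell+2$ versus $\ell+1$, and I expect that to be where the effort goes.
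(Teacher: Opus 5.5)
Your convexity argument is correct and is essentially the paper's proof. The paper also bounds $\frac{n-\ell-1}{\ell+2}$ by $\frac{n-\ell}{\ell+1}$ and rewrites the claim as the quadratic inequality $x^2-(1-c)q^{2^{\ell+1}}x+q^{3\cdot 2^\ell}\le 0$ in $x=\frac{n-\ell}{\ell+1}$. It then uses $\sqrt{1-\alpha}\ge 1-\kappa\alpha$ to show that $I_{\ell,c,\kappa}$ lies between the roots, which is equivalent to your check at the two endpoints.

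The right endpoint, which you left open, closes at once. Take $B=\frac{p^{2^{\ell+1}}}{\ell+1}$ and $\beta=\frac{2\kappa p^{2^\ell}}{(1-c)^2}$; this $B$ already accounts for the off-by-one and keeps $g$ convex. At that endpoint the two terms are exactly $\frac{p^{2^\ell}}{(1-c)(1-\beta)}$ and $(1-c)(1-\beta)$. So the requirement is $2\kappa(1-\beta)\ge 1$, i.e. $1+\frac{4\kappa^2p^{2^\ell}}{(1-c)^2}\le 2\kappa$. This is the same condition as at the left endpoint, because the two endpoints are symmetric about the vertex of the quadratic. It is also exactly the condition that makes the paper's square-root estimate valid.
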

\begin{proof}
Rewriting (\ref{ineq:19a}) in the form
$$
\frac{\E(f_{\ell+1})}{\E(f_{\ell})}\cdot \frac{\E(f_{\ell})}{\E(f_{\ell-1})}-(1-c)\cdot\frac{\E(f_{\ell})}{\E(f_{\ell-1})}+1\le 0
$$
and using (\ref{eq:ratio1a}) we obtain
\begin{eqnarray*}
 \frac{n-\ell-1}{\ell+2}\cdot p^{2^{\ell+1}}\cdot \frac{n-\ell}{\ell+1}\cdot p^{2^{\ell}}- (1-c)\cdot \frac{n-\ell}{\ell+1}\cdot p^{2^{\ell}}+1\le 0.
\end{eqnarray*}
The above inequality is a consequence of the following:
\begin{eqnarray*}
 \left(\frac{n-\ell}{\ell+1}\right)^2\cdot p^{3\cdot 2^{\ell}}-(1-c)\cdot \left( \frac{n-\ell}{\ell+1}\right) \cdot p^{2^{\ell}}+1\le 0.
\end{eqnarray*}
The latter inequality can be rewritten as 
\begin{eqnarray}\label{eq:quadratica}
\left(\frac{n-\ell}{\ell+1}\right)^2 -(1-c)\cdot \left(\frac{n-\ell}{\ell+1}\right)\cdot q^{2\cdot 2^{\ell}}+  q^{3\cdot 2^\ell}\le 0. 
\end{eqnarray}
The roots of the quadratic equation $$x^2-(1-c)\cdot x\cdot q^{2\cdot 2^{\ell}} +q^{3\cdot 2^\ell}=0$$ are 
$$
x=\frac{n-\ell}{\ell+1} = \frac{(1-c)}{2}\cdot q^{2\cdot {2^\ell}}\cdot\left(1\pm \sqrt{1-\alpha}\right), \quad \mbox{where}\quad \alpha=\frac{4\cdot p^{2^\ell}}{(1-c)^2}.
$$
Inequality (\ref{in:new1}) gives 
\begin{eqnarray*}\label{eq:est}
\alpha <2\kappa-1 \quad \mbox{and}\quad \kappa <1,
\end{eqnarray*}
 and hence we can 
estimate the square root as follows
$$
1-\kappa\cdot \alpha \le \sqrt{1-\alpha}.
$$
Taking into account that the quadratic function (\ref{eq:quadratica}) is negative between the roots, we arrive at the following conclusion: for $n$ satisfying the inequality
\begin{eqnarray}\label{in:24}
\frac{2\kappa q^{2^\ell}}{1-c} \, \le \frac{n-\ell}{\ell+1}\, \le (1-c)q^{ 2^{\ell+1}}-  \frac{2\kappa q^{2^\ell}}{1-c}
\end{eqnarray}
the strong domination (\ref{ineq:19a}) holds. Clearly, the set of solutions to (\ref{in:24}) coincides with $I_{\ell, c, \kappa}$, as defined in (\ref{in:26a}). 
\end{proof}
\subsection{} For $\ell=0, 1, 2, \dots,$ let $\ell \mapsto c(\ell)$ and $\ell\mapsto \kappa(\ell)$ be two functions satisfying 
$$0<c(\ell)<1/2\quad\mbox{and}\quad  
1/2<\kappa(\ell)<1$$ 
and 
$$c(\ell)\to 0,\quad\mbox{}\quad \kappa(\ell)\to 1/2,\quad \mbox{when}\quad \ell\to \infty.$$ 
Additionally, we shall assume that for all large $\ell$ one has
\begin{eqnarray}\label{eq:30a}
\kappa(\ell)  >1/2 + 8p^{2^{\ell}},
\end{eqnarray}
which guarantees (\ref{in:new1}). 
Denote
\begin{eqnarray*}\label{eq:nprime}
\N'=\bigsqcup_{\ell\ge 0} I_{\ell, c(\ell), \kappa(\ell)}.
\end{eqnarray*}
\begin{lema}
The density of the set $\N'\subset \N$ equals $1$. 
\end{lema}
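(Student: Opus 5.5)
The plan is to compare, for each $\ell$, the interval $I_\ell:=I_{\ell,c(\ell),\kappa(\ell)}$ with the block of integers it sits in. Write $a_\ell=(\ell+1)q^{2^\ell}+\ell$ for the left end of $J_\ell$. The left end of $J_{\ell+1}$ is then $a_{\ell+1}=(\ell+2)q^{2^{\ell+1}}+\ell+1$, which is exactly the right end of $J_\ell$. So the half-open blocks $B_\ell=[a_\ell,a_{\ell+1})$ tile $[a_0,\infty)$. The complement $\N-\N'$ inside $B_\ell$ consists of two pieces:
\begin{itemize}
\item a left gap of length at most $\bigl(\tfrac{2\kappa}{1-c}-1\bigr)(\ell+1)q^{2^\ell}+1$;
\item a right gap of length at most $(\ell+2)q^{2^{\ell+1}}-(1-c)(\ell+1)q^{2^{\ell+1}}\bigl[1-\tfrac{2\kappa p^{2^\ell}}{(1-c)^2}\bigr]+2$.
\end{itemize}
These bounds come from comparing the endpoints in (\ref{in:26a}) with those of $J_\ell$ in (\ref{eq:16}).

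Next I bound the right gap. Simplifying it gives
\[
(\ell+2)q^{2^{\ell+1}}\cdot\Bigl[1-\tfrac{(1-c)(\ell+1)}{\ell+2}\Bigr]+O\bigl((\ell+1)q^{2^\ell}\bigr)+2.
\]
Its ratio to $|B_\ell|\approx(\ell+2)q^{2^{\ell+1}}$ is therefore at most
\[
\delta_\ell:=c(\ell)+\tfrac{1}{\ell+2}+O(q^{-2^\ell}),
\]
which tends to $0$. The left gap is at most a constant multiple of $(\ell+1)q^{2^\ell}$ (using $\kappa<1$ and $c<1/2$), so it is $O(q^{-2^\ell})\cdot|B_\ell|$, which is also $o(|B_\ell|)$. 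Hence the number of elements of $B_\ell$ not in $\N'$ is at most $\delta'_\ell|B_\ell|$ with $\delta'_\ell\to0$. We only need this for large $\ell$, where (\ref{eq:30a}) guarantees that the intervals are as described. I will also check that $I_\ell$ is nonempty, which is (\ref{eq:ilc}).

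Finally I pass to density. Given $N$, choose $L$ with $N\in B_L$. Then
\[
\sharp\bigl([0,N]-\N'\bigr)\le C_0+\sum_{\ell<L}\delta'_\ell|B_\ell|+\sharp\bigl(B_L\cap[0,N]-\N'\bigr).
\]
The main obstacle is the last, partial block, because $N$ may sit in the middle of $B_L$ while $|B_L|$ is vastly larger than $a_L$. I handle it directly.
\begin{itemize}
\item The points of $B_L\cap[0,N]$ missing $\N'$ are at most the left gap (of size $o(a_L)$), plus the part of the right gap below $N$.
\item The right gap starts near $(1-c)(\ell+1)q^{2^{\ell+1}}$. So if $N$ exceeds that start, the missing count is at most $\delta_L|B_L|\le\frac{\delta_L}{1-c-o(1)}N$.
\item The earlier blocks satisfy $\sum_{\ell<L}|B_\ell|\le a_L\le N$. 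Their total contribution is then $o(N)$ by a Cesàro-type argument: the blocks grow super-exponentially, so $\sum_{\ell<L}\delta'_\ell|B_\ell|=o(a_L)$.
\end{itemize}
Combining these gives $\sharp([0,N]-\N')=o(N)$, which is density $1$.
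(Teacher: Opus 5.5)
Your proof is correct, but it takes a different route from the paper's.

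\textbf{How the paper argues.} The paper never sums over blocks. It observes that $\psi(m)=\sharp([0,m]\cap\N')/m$ increases when $m\in\N'$ and decreases when $m\notin\N'$. Hence $\liminf\psi$ is attained along the points $m_\ell=\inf I_{\ell,c(\ell),\kappa(\ell)}-1$, the last point of each gap. At such a point it uses only the preceding interval:
\[
\psi(m_\ell)\ge \frac{|I_{\ell-1,c',\kappa'}|}{m_\ell}\approx\frac{(1-c')\,\ell\, q^{2^\ell}}{\frac{2\kappa(\ell)(\ell+1)}{1-c(\ell)}\, q^{2^\ell}}\to 1 .
\]

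\textbf{How your version compares.} Your worst case, $N=m_L$, is exactly this point. The two quantities you control there are the right gap of $B_{L-1}$ (relative size about $c+\frac{1}{L+1}$) and the left gap of $B_L$ (relative size about $\frac{2\kappa}{1-c}-1$). These are precisely what the paper's single ratio encodes. The monotonicity of $\psi$ lets the paper skip both the Ces\`aro sum over earlier blocks and your case split on where $N$ sits in $B_L$. What your approach gains is an explicit bound $\sharp([0,N]\setminus\N')=o(N)$ for every $N$, without the monotonicity trick, at the cost of more bookkeeping.

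\textbf{A step to make explicit.} In the partial-block estimate you need the left gap of $B_L$ to be $o(a_L)$. This does not follow from your earlier bound ``a constant multiple of $(\ell+1)q^{2^\ell}$'', which only gives $O(a_L)$. It requires $\frac{2\kappa(\ell)}{1-c(\ell)}\to 1$, so it genuinely uses $\kappa(\ell)\to 1/2$ and $c(\ell)\to 0$. For instance, with $\kappa\equiv 3/4$ one gets $\psi(m_\ell)\to 2/3$.

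\textbf{A minor correction.} The inclusion $I_\ell\subset J_\ell$ does not come from (\ref{eq:30a}); it follows from $2\kappa>1>1-c$ alone. Condition (\ref{eq:30a}) is only needed for the strong-domination lemma, not for the density statement.
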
 
\begin{proof} Consider the density function
$$
\psi(m) =\frac{ \sharp([0, m]\cap \N')}{m}, \quad m\in \N. 
$$
Since the set $\N'$ is a union of intervals $I_{\ell, c(\ell), \kappa(\ell)}$, it is easy to see that 
$$\psi(m)\ge \psi(m-1) \quad \mbox{if}\quad  m\in \N'$$
 and 
 $$\psi(m)\le  \psi(m-1) \quad \mbox{if}\quad  m\notin \N'.$$
 Thus, 
 $$\liminf_{m\to \infty} \psi(m) = \liminf_{\ell\to \infty} m_\ell,$$ where 
 $$m_\ell= \inf I_{\ell, c(\ell), \kappa(\ell)}-1. $$
By (\ref{in:26a}),
we have
$$
m_\ell\le  \frac{2\kappa(\ell) (\ell+1)}{1-c(\ell)}\cdot q^{2^{\ell}}+\ell -1.
$$ 
Taking into account (\ref{eq:ilc}), we obtain, where for the sake of compactness we write $c'=c(\ell-1)$ and $\kappa'=\kappa(\ell-1)$:
\begin{eqnarray*}
\sharp ([0, m_\ell]\cap \N') &\ge& |I_{\ell-1, c',\kappa'}|
\ge 
(1-c')\cdot \ell \cdot q^{2^{\ell}}\cdot \left[1-\frac{4\kappa'\cdot p^{2^{\ell-1}}}{(1-c')^2}\right]-1,\end{eqnarray*}
and hence
\begin{eqnarray*}
\psi(m_\ell) \ge \frac{(1-c')\cdot \ell \cdot q^{2^{\ell}}\cdot \left[1-\frac{4\kappa' p^{2^{\ell-1}}}{(1-c')^2}\right]-1}{\frac{2\kappa(\ell) \cdot (\ell+1)}{1-c(\ell)}\cdot q^{2^{\ell}}+\ell -1}.
\end{eqnarray*}
Since $c\to 0$ and $\kappa\to 1/2$, we see that the RHS of this inequality converges to 1. 
Since obviously $\psi(m)\le 1$, we conclude that $\lim_{m\to \infty} \psi(m)=1$, as claimed. 
\end{proof}

\section{Concentration around the expectation}

In this section we analyse probability that the face numbers $f_j(X)$ of an HMR random simplicial complex 
(see \S \ref{sec:hmr})
deviate significantly from its expectation $\E(f_j)$. We shall be mainly interested in the case when the dimension $j$ equals either $\ell, \ell+1, \mbox{or}\, \ell-1$, where $\ell$ is the critical dimension. 
Our main tool is the Chebyshev inequality 
\begin{eqnarray}\label{in:32f}
\mathcal P(|f_j(X) - \mathbb E(f_j)|\ge \mu \cdot \mathbb E(f_j)) \le \frac{1}{\mu^2}\cdot \frac{{\rm {Var}}(f_j)}{\mathbb E(f_j)^2}
\end{eqnarray}
which estimates the probability of the inequality $|f_j(X) - \mathbb E(f_j)|\ge \mu \cdot \mathbb E(f_j)$. Recall that the variance  
${\rm {Var}}(f_j)$ is defined as 
${\rm {Var}}(f_j)=\E(\left [f_j-\E(f_j)\right ]^2)=\E(f_j^2)-\E(f_j)^2.$

\begin{lema}\label{lm:26}
Consider an HMR random simplicial complex $X$ with the number of vertexes $n$ large enough, $n\to \infty$. Then for $j\le \ell+1$, where $\ell$ is the critical dimension, one has 
$$
\frac{{{\rm {Var}}(f_j)}}{\E(f_j)^2}\le \frac{3 (\ell+2)^{2(\ell+2)}}{n}.
$$
\end{lema}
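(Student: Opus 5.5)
We write $f_j=\sum_{\sigma}\chi_\sigma$, where $\sigma$ runs over the $j$-dimensional simplexes of $\Delta_n$ and $\chi_\sigma$ is the indicator of the event $\sigma\subset X$. Then
$$
{\rm Var}(f_j)=\sum_{\sigma,\tau}\left(\mathcal P(\sigma\cup\tau\subset X)-\mathcal P(\sigma\subset X)\mathcal P(\tau\subset X)\right),
$$
where ``$\sigma\cup\tau\subset X$'' means that the complex generated by $\sigma$ and $\tau$ lies in $X$. By (\ref{eq:x}), if $\sigma\cap\tau=\emptyset$, the faces of $\sigma$ and of $\tau$ are disjoint sets. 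Hence the covariance term vanishes, and only pairs with $|\sigma\cap\tau|=k\ge1$ contribute. For such a pair, (\ref{eq:x}) gives
$$
\mathcal P(\sigma\cup\tau\subset X)=p^{2(2^{j+1}-1)-(2^k-1)}=\mathcal P(\sigma\subset X)^2\, q^{2^k-1}.
$$
So we obtain the bound
$$
\frac{{\rm Var}(f_j)}{\E(f_j)^2}\le \sum_{k=1}^{j+1}\frac{N_k}{\binom n{j+1}^2}\,q^{2^k-1},
$$
where $N_k=\binom n{j+1}\binom{j+1}k\binom{n-j-1}{j+1-k}$ is the number of ordered pairs with $|\sigma\cap\tau|=k$.

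Next we bound the ratio $N_k/\binom n{j+1}^2=\binom{j+1}k\binom{n-j-1}{j+1-k}/\binom n{j+1}$. This is at most $\binom{j+1}{k}\,(j+1)^{k}\, n^{-k}\cdot C$, using $\binom{n-j-1}{j+1-k}\le n^{j+1-k}/(j+1-k)!$ and $\binom n{j+1}\ge (n-j)^{j+1}/(j+1)!$. Since $j\le \ell+1=O(\log\log n)$, the factor $(n/(n-j))^{j+1}$ tends to $1$ and can be absorbed into the constant. Thus the $k$-th term is roughly
$$
\binom{j+1}{k}\frac{(j+1)!}{(j+1-k)!}\cdot\frac{q^{2^k-1}}{n^k}\le (\ell+2)^{2k}\,\frac{q^{2^k}}{n^k}.
$$

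The key input, and the main obstacle, is controlling $q^{2^k}/n^{k}$ uniformly for $k\le j+1\le \ell+2$ using only that $n\in J_\ell$, where $n<(\ell+2)q^{2^{\ell+1}}+\ell+1$. This upper bound is the wrong direction, so we instead use the lower bound $n>(\ell+1)q^{2^\ell}$, i.e. $q^{2^\ell}<n$. For $k\le \ell$ this gives $q^{2^k}\le q^{2^\ell}<n$, so the term is at most $(\ell+2)^{2k}n^{1-k}$. For $k=1$ this is $(\ell+2)^2 q/n$, and for $k\ge2$ the terms are even smaller. The remaining cases are $k=\ell+1$ and $k=\ell+2$, which occur only when $j\ge\ell$. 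There we use $q^{2^{\ell+1}}=(q^{2^\ell})^2<n^2$ and $q^{2^{\ell+2}}<n^4$. Against $n^{\ell+1}$ and $n^{\ell+2}$, these give factors $n^{1-\ell}$ and $n^{2-\ell}$, both at most $n^{-1}$ once $\ell\ge3$, which holds for large $n$.

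Summing the at most $\ell+2$ terms, each bounded by $(\ell+2)^{2k}q/n$ or better, we get a total of at most $3(\ell+2)^{2(\ell+2)}/n$. The constant $3$ absorbs $q$ and the geometric decay in $k$, since the $k=1$ term dominates. Some care is needed to make $q\le 3$-type constants disappear. I expect to replace the crude per-term bound by noting that for $k\ge2$ the extra factors $n^{-(k-1)}$ beat $q$ and $(\ell+2)^{2k}$. The $k=1$ term is $(\ell+2)^2q/n\ll(\ell+2)^{2(\ell+2)}/n$ for large $\ell$.
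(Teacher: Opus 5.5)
Your proof is correct and follows essentially the paper's route: the same splitting of $\E(f_j^2)/\E(f_j)^2$ (equivalently, of the covariances) by the overlap size $k=|\sigma\cap\tau|$, the same binomial-ratio estimate, and the same key input $q^{2^\ell}<n$, which the paper uses in the form $2^\ell<\log_q n$ from Lemma \ref{lm:22}. The paper bounds all $i\ge 2$ at once, using monotonicity of $2^x/x$ and $2^{\ell+2}/(\ell+2)\le\frac12\log_q n$ to get $r_i\le(\ell+2)^{2(\ell+2)}n^{-i/2}$, whereas you split into the cases $k\le\ell$ and $k\in\{\ell+1,\ell+2\}$; in your final tally the $k=1$ term does not actually dominate, but your bounds for $k\ge 2$ carry no factor $q$, so the geometric sum $\sum_{k\le \ell+2}(\ell+2)^{2k}$ still yields the constant $3$.
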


\begin{proof} 
%
 
Following \cite{FM}, page 13, one has
$$
\mathbb E(f^2_j)= \sum_{i=0}^{j+1}\binom n {j+1}\cdot \binom {j+1}{i} \cdot \binom {n-j-1}{j+1-i}\cdot p^{2\cdot 2^{j+1}-2^i-1}
$$
and 
\begin{eqnarray*}\label{eq:30}
\frac{\mathbb E(f^2_j)}{\mathbb E(f_j)^2}= \sum_{i=0}^{j+1}\frac{\binom {j+1}{i} \cdot \binom {n-j-1}{j+1-i}}{\binom n {j+1}}\cdot p^{-2^i+1}.
\end{eqnarray*}
Denote 
$$
r_i= \frac{\binom {j+1}{i} \cdot \binom {n-j-1}{j+1-i}}{\binom n {j+1}}\cdot p^{-2^i+1}\le \frac{\binom {j+1}{i} \cdot \binom {n-j-1}{j+1-i}}{\binom n {j+1}}\cdot q^{2^i}
$$
where $q=p^{-1}$ and $ i=0, 1, \dots, j+1.$
For the term $r_0$ we obviously have
\begin{eqnarray*}\label{eq:r0}
r_0= \frac{\binom {n-j-1}{j+1}}{{\binom{n} {j+1}}}<1.
\end{eqnarray*}

The term $r_1$ can be estimated as follows
\begin{eqnarray*}\label{eq:r1}
r_1&=& \frac{(j+1)\cdot \binom {n-j -1}{j}}{\binom n {j+1} } \cdot q = \frac{(j+1)\cdot \binom {n-j -1}{j}}{\frac{n}{j+1} \cdot \binom {n-1} j  }\cdot q \nonumber \\
&\le & \frac{(j+1)^{2}}{n}\cdot q\le \frac{q\cdot (\ell+2)^{2}}{n}.
\end{eqnarray*}
Next we examine the terms $r_i$ with $i=2, 3, \dots, j+1$. 
We use below the well-known inequality
$
\frac{a^b}{b^b}\le \binom a b\le a^b
$
and the fact that the function $x\mapsto \frac{2^x}{x}$ is increasing for $x\ge 2$. We have
\begin{eqnarray*}
r_i&\le& \frac{(j+1)^{j+1+i}}{n^i}\cdot q^{2^i}
\le (\ell+2)^{2(\ell+2)}\cdot \left[\frac{q^{\frac{2^i}{i}}}{n}\right]^i \\
&\le & (\ell+2)^{2(\ell+2)}\cdot \left[\frac{q^{\frac{2^{\ell+2}}{\ell+2}}}{n}\right]^i 
\le  
(\ell+2)^{2(\ell+2)}\cdot n^{-i/2}.
\end{eqnarray*}
We used above our assumption $i\le j+1\le \ell+2$, Lemma \ref{lm:22}, and the following inequality 
\begin{eqnarray*}\label{eq:beta-1}
\frac{2^{\ell+2}}{\ell+2}\le  \frac{4\cdot\log_q n}{\ell+2} \le 
\frac{1}{2}\cdot \log_qn
\end{eqnarray*}
which is valid for $\ell\ge 6$. 
Thus, we obtain, assuming that $n\ge 4$, 
\begin{eqnarray*}
r_2+r_3+\dots+r_{j+1} &\le & (\ell+2)^{2(\ell+2)}\cdot \left[n^{-1}+n^{-3/2}+\dots+n^{-(j+1)/2}\right]\nonumber \\
&\le& \frac{(\ell+2)^{2(\ell+2)}}{n}\cdot \frac{1}{1-n^{-1/2}}\le \frac{2\cdot (\ell+2)^{2(\ell+2)}}{n}.\label{eq:ri}
\end{eqnarray*}
Hence, 
$$
\frac{{{\rm {Var}}(f_j)}}{\mathbb E(f_j)^2}=\frac{\E(f_j^2)}{\E(f_j)^2} -1 = r_0+r_1+\dots+r_{j+1}-1 \le \frac{3\cdot (\ell+2)^{2(\ell+2)}}{n}.
$$
This completes the proof. 
\end{proof}
\begin{corollary}\label{cor:27}
Suppose that $\mu=\mu(n)>0$ is such that 
\begin{eqnarray}\label{eq:38}
\frac{(\ell+2)^{2(\ell+2)}}{n \mu^2} \to 0\quad \mbox{as}\quad  n\to \infty.
 \end{eqnarray} 
 Then, with probability tending to 1 as 
$n\to \infty$, $n\in \N'$, 
for an HMR random simplicial complex $X\subset \Delta_n$ the following 3 inequalities hold:
$$
f_\ell(X)>(1-\mu)\cdot \E(f_\ell), \quad f_{\ell-1}(X)<(1+\mu)\cdot \E(f_{\ell-1}), \quad  f_{\ell+1}(X)<(1+\mu)\cdot \E(f_{\ell+1}). 
$$
\end{corollary}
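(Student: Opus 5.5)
The plan is to apply the Chebyshev inequality (\ref{in:32f}) separately to each of the three face numbers $f_{\ell-1}$, $f_\ell$, $f_{\ell+1}$. Then Lemma \ref{lm:26} bounds the right-hand side, and a union bound over the three events finishes the argument.

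First I fix $n\in\N'$, so that $n\in I_{\ell,c(\ell),\kappa(\ell)}\subset J_\ell$ and $\ell$ is the critical dimension. As $n\to\infty$ inside $\N'$ we have $\ell\to\infty$, because the sets $I_{\ell,c,\kappa}$ are finite and pairwise disjoint. So for $n$ large the hypotheses of Lemma \ref{lm:26} hold, in particular $\ell\ge 6$ as used in its proof. Each of the indices $j=\ell-1,\ell,\ell+1$ satisfies $j\le\ell+1$, so Lemma \ref{lm:26} gives
$$\frac{{\rm Var}(f_j)}{\E(f_j)^2}\le \frac{3(\ell+2)^{2(\ell+2)}}{n}, \qquad j\in\{\ell-1,\ell,\ell+1\}.$$

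Next, (\ref{in:32f}) bounds the probability of $|f_j(X)-\E(f_j)|\ge\mu\,\E(f_j)$ by $3(\ell+2)^{2(\ell+2)}/(n\mu^2)$, which tends to $0$ by (\ref{eq:38}). Off this event, for $j=\ell$ we get $f_\ell(X)>(1-\mu)\E(f_\ell)$, and for $j=\ell\pm1$ we get $f_j(X)<(1+\mu)\E(f_j)$. The union of the three exceptional events has probability at most $9(\ell+2)^{2(\ell+2)}/(n\mu^2)\to 0$. Hence all three inequalities hold simultaneously with probability tending to $1$.

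The only point needing care is that Lemma \ref{lm:26} is applied uniformly. Its proof uses Lemma \ref{lm:22} and the inequality $2^{\ell+2}/(\ell+2)\le \tfrac12\log_q n$, and these require $\ell$ large, i.e.\ $n\in J_\ell$ with $\ell$ large. Both hold automatically for $n\in\N'$ large. I would also note in passing that the case $\ell-1<0$ does not arise for large $n$.
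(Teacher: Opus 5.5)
Your proposal is correct and takes the same approach as the paper, whose proof simply combines Chebyshev's inequality (\ref{in:32f}) with the variance bound of Lemma \ref{lm:26}. You also make explicit two details the paper leaves implicit: the union bound over the three indices, and the fact that $\ell\to\infty$ along $\N'$, so that Lemma \ref{lm:26} applies.
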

\begin{proof}
This follows from the inequality (\ref{in:32f}) and Lemma \ref{lm:26}. 
\end{proof}

\section{Proof of Theorem \ref{thm:tc2}}\label{sec:9}

\begin{proof}
Set $\mu=n^{-\nu}$, where $0<\nu<1/2$; for instance, one can take $\nu=1/4$. Then (\ref{eq:38}) is safisfied. 
We can also set 
\begin{eqnarray}\label{eq:40d}
c=c(\ell)= 6\cdot \max\{n^{-\nu};\  n\in J_\ell\}.
\end{eqnarray} 
Besides, set $$\kappa= 1/2 + 9p^{2^{\ell}};$$ compare (\ref{eq:30a}). 

If $n\in I_{\ell, c, \kappa}$ then by Corollary \ref{cor:27}, with probability tending to $1$, one has
\begin{eqnarray*}
f_\ell(X)-f_{\ell-1}(X)-f_{\ell+1}(X) &>& (1-\mu)\cdot \E(f_\ell) -(1+\mu)\cdot \E(f_{\ell-1}) -(1+\mu)\cdot \E(f_{\ell+1})\\
&=& \E(f_\ell)-\E(f_{\ell-1})-\E(f_{\ell+1}) - \mu\cdot(\E(f_\ell) +\E(f_{\ell-1})+\E(f_{\ell+1}))\\
&\ge & c\cdot \E(f_\ell) - \mu\cdot(\E(f_\ell) +\E(f_{\ell-1})+\E(f_{\ell+1}))\\
&\ge& (c/3 -\mu)\cdot (\E(f_\ell) +\E(f_{\ell-1})+\E(f_{\ell+1}))\\
&\ge & \mu \cdot (\E(f_\ell) +\E(f_{\ell-1})+\E(f_{\ell+1}))\\
&\ge & n^{-\nu}\cdot \frac{qn^{\ell-1}}{(\ell+1)^{\ell-1}}\to \infty. 
\end{eqnarray*}
We used the inequality (\ref{ineq:19a}) and the observation that for $n\in I_{\ell,c, \kappa}$ one has $c(\ell) \ge 6\mu$, as one can see from (\ref{eq:40d}). 
Lemma \ref{lm:22} is used here twice, firstly to ensure that the assumptions of Corollary \ref{cor:27} are satisfied and secondly to get 
the limit on the last line above. 
Applying  Lemma \ref{lm:tc22}, we see that probability that a random complex $X\subset \Delta_n$ satisfies $\tc(X)\ge 2$ tends to 1 as $n\to \infty$, $n\in \N'$. 
\end{proof}

\section{Open questions}

Comparing Theorem \ref{thm:tcle2} which gives an upper bound $\tc(X)\le 2$ for the medial regime random simplicial complexes with 
Theorem \ref{thm:tc2} which (1) applies to a smaller class of HMR random simplicial complexes and (2) it restricts the set of values of the number of vertexes $n$ to a subset $\N'\subset \N$, we come to the following questions: 

1. Can Theorem \ref{thm:tc2} be extended to include all random simplicial complexes in the medial regime and not only HMR random simplicial complexes? 

2. Can Theorem \ref{thm:tc2} be strengthened by not limiting $n$ to lie in a subset $\N'\subset \N$ of density 1? In other words, can one take $\N'=\N$? 

We hope that these questions will be resolved in the future by using appropriate new techniques.  

The reason why we need to restrict the number of vertexes $n$ to the subset $\N'\subset \N$ is that for $n\notin \N'$ two subsequent face numbers $f_\ell$ and $f_{\ell+1}$ of a random complex are of similar magnitude and hence no significant homology in these two dimensions can be easily predicted. On the other hand, it would be extremely interesting and surprising
if for an infinite sequence of integers $n_i\to \infty$ the probability that an HMR random simplicial complex $X\subset \Delta_{n_i}$ satisfies 
$\tc(X)=2$ does not converge to $1$.


\begin{thebibliography}{99}


\bibitem{Bar} J.A. Barmak, \textit{Connectivity of Ample, Conic, and Random Simplicial Complexes}. Int. Math. Res. Not. IMRN 2023, no. 8, pp. 6579 -- 6597.

\bibitem{Bar2} J. A. Barmak, \textit{On the connectivity of conic complexes},  J. Appl. Comput. Topol. 8 (2024), no. 6, 1571–1574.

\bibitem{Bat} F. Battiston, G. Cencetti, I. Iacopini, V. Latora, M. Lucas, A. Patania, J.-G. Young, G. Petri, \textit{Networks beyond pairwise interactions: structure and dynamics}. Phys. Rep. 874 (2020), 1–92

\bibitem{Bjo} A. Bj\"orner, Topological methods, Handbook of Combinatorics (R. Graham, M. Grötschel, and L. Lovász, eds.), North-Holland, Amsterdam, 1994, pp. 1819-1872.

\bibitem{BK} O. Bobrowski, D. Krioukov, 
\textit{Random simplicial complexes: models and phenomena,} Higher-order systems, pp. 59 -- 96,
Underst. Complex Syst., Springer, Cham, 2022. 

\bibitem{Bol} B. Bollobas, \textit{Random Graphs}, Cambridge University Press, 2001.

\bibitem{CF} A. Costa, M. Farber, 
\textit{Large random simplicial complexes, I.}
J. Topol. Anal. 8 (2016), no. 3, 399–429.

\bibitem{CLOT} O. Cornea, G. Lupton, J. Oprea, D. Tanr\'e, \textit{Lusternik-Schnirelmann category}. AMS, 2003.

\bibitem{Erd} P. Erd\"os. \textit{On an Elementary Proof of Some Asymptotic Formulas in the Theory of Partitions}. Ann. Math. 43(1942), 437-450.

\bibitem{EFM} C. Even-Zohar, M. Farber, L. Mead, \textit{Ample simplicial complexes}. European J. of Math., 8(2022), 1-32.

\bibitem{Far0} M. Farber, \textit{Topological Complexity of Motion Planning}. Journal of Discrete and Comput. Geom. 29(2003), 211–221.

\bibitem{Far2} M. Farber, \textit{Invitation to topological robotics}. Zurich Lectures in Advanced Mathematics. European Mathematical Society (EMS), Zürich, 2008.

\bibitem{Far} M. Farber, \textit{Large simplicial complexes: universality, randomness, and ampleness}. 
 J. Appl. Comput. Topol. 8 (2024), no. 6, 1551–1574.



\bibitem{FM} M. Farber, L. Mead, \textit{Random simplicial complexes in the medial regime}. Topology Appl. 272(2020), 107065, 22 pp.

\bibitem{FMN} M. Farber, L. Mead, T. Nowik, \textit{Random simplicial complexes, duality and the critical dimension.} J. Topol. Anal. 14 (2022), no. 1, pp. 1–31.


\bibitem{Ganea} T. Ganea, \textit{Lusternik - Schnirelmann category and strong category}, Illinois J. Math. 11 (1967), 417–427.
\bibitem{GLO} M. Grant, G. Lupton, and J. Oprea, \textit{Spaces of topological complexity one}. Homology Homotopy Appl. 15 (2013), no. 2, 73–81.

\bibitem{Gro} D.P. Grossman. \textit{An estimation of the category of Lusternik-Shnirelmann}. C.R. (Doklady) Acad. Sci. URSS (N.S.) 54, 1946.

\bibitem{Hatcher} A. Hatcher, \textit{Algebraic topology}, Cambridge University Press, 2002.

\bibitem{JLR} S. Janson, T. Luczak, A. Rucinski,  
Random graphs. Wiley-Interscience, New York, 2000

\bibitem{LM} N. Linial, R Meshulam, \textit{Homological connectivity of random 2-complexes}. Combinatorica 26 (2006), no. 4, 475–487. 

\bibitem{Shi} A.N. Shiryaev, \textit{Probability}, Springer, 1984.

\bibitem{S} E. Spanier, \textit{Algebraic Topology}, 1966. 

\bibitem{Wall} C.T.C. Wall, 
\textit{Finiteness conditions for CW-complexes,} Ann. of Math. (2) 81 (1965), 56–69.



\end{thebibliography}
\end{document}